\newcommand{\inlineitem}[1][]{%
\ifnum\enit@type=\tw@
    {\descriptionlabel{#1}}
  \hspace{\labelsep}%
\else
  \ifnum\enit@type=\z@
       \refstepcounter{\@listctr}\fi
    \quad\@itemlabel\hspace{\labelsep}%
\fi} \makeatother
\newcommand{\ga}{\alpha}
\newcommand{\gn}{\nu}
\newcommand{\gp}{\pi}
\newcommand{\gs}{\sigma}
\newcommand{\gf}{\phi}
\newcommand{\gch}{\chi}
\newcommand{\Gd}{\Delta}
\newcommand{\subs}{\subset}
\newcommand{\bs}{\backslash}
\newcommand{\nin}{\notin}
\newcommand{\ti}{\tilde}
\newcommand{\mbb}{\mathbb}
\newcommand{\mcl}{\mathcal}
\newcommand{\ol}{\overline}
\newcommand{\us}{\underset}
\newcommand{\os}{\overset}
\newcommand{\lra}{\longrightarrow}
\newcommand{\N}{\mbb N}
\newcommand{\Ra}{\Rightarrow}
\newcommand{\es}{\emptyset}
\newcommand{\equ}[1]{%
\begin{equation*}
#1
\end{equation*}
}
\newcommand{\equa}[1]{%
\begin{equation*}
\begin{aligned}
#1
\end{aligned}
\end{equation*}
}
\newcommand{\equan}[2]{%
\begin{equation}
\label{Eq:#1}
\begin{aligned}
#2
\end{aligned}
\end{equation}
}
\DeclareMathOperator{\Det}{Det}
\DeclareMathOperator{\Disc}{Disc}
\newcommand{\mattwo}[4]{%
\begin{pmatrix}
  #1 & #2\\ #3 & #4
\end{pmatrix}
}
\newcommand{\matcolthree}[3]{%
\begin{pmatrix}
  #1\\#2\\#3
\end{pmatrix}
}
\newcommand{\matthree}[9]{%
\begin{pmatrix}
  #1 & #2 & #3\\ #4 & #5 & #6\\ #7 & #8 & #9
\end{pmatrix}
}
\newtheorem{theorem}{Theorem}[section]
\newtheorem{prop}[theorem]{Proposition}
\newtheorem{claim}[theorem]{Claim}
\newtheorem{lemma}[theorem]{Lemma}
\newtheorem{cor}[theorem]{Corollary}
\theoremstyle{definition}
\newtheorem{defn}[theorem]{Definition}
\newtheorem{example}[theorem]{Example}
\theoremstyle{remark}
\newtheorem{note}[theorem]{Note}
\newtheorem{remark}[theorem]{Remark}
\numberwithin{equation}{section}
\def\namedlabel#1#2{\begingroup
   \def\@currentlabel{#2}%
   \label{#1}\endgroup
}
\newtheorem*{thmA}{\bf{Theorem A}}
\newtheorem*{thmB}{\bf{Theorem B}}
\begin{document}
\title[On Very Generic Discriminantal Arrangements]{On Very Generic Discriminantal Arrangements}
\author[C.P. Anil Kumar]{Author: C.P. Anil Kumar*}
\address{Post Doctoral Fellow in Mathematics, Harish-Chandra Research Institute, Chhatnag Road, Jhunsi, Prayagraj (Allahabad)-211019, Uttar Pradesh, INDIA
}
\email{akcp1728@gmail.com}
\thanks{*The work is done when the author is a Post Doctoral Fellow at HRI, Allahabad.}
\subjclass{Primary: 52C35}
\keywords{Linear inequalities in many variables, hyperplane arrangements, line arrangements, discriminantal arrangements}
\date{\sc \today}
\begin{abstract}
In this article we prove two main results. Firstly,	we show that any six-line arrangement, consisting of three pairs of mutually perpendicular lines, does not give rise to a ``very generic or sufficiently general" discriminantal arrangement in the sense of C.~A.~Athanasiadis~\cite{MR1720104}. We give two proofs of the first result. The second result is as follows. The codimension-one boundary faces of (a region) a convex cone of a very generic discriminantal arrangement has not been characterised and is not known even though the intersection lattice of a very generic discriminantal arrangement is known. So secondly, we show that the number of simplex cells of the very generic hyperplane arrangement $\mcl{H}^m_n=\{H_i:\us{j=1}{\os{m}{\sum}}a_{ij}x_j=c_i,1\leq i\leq n\}$ may not be not precisely equal to the number of codimension-one boundary hyperplanes of 
$\mbb{R}^n$ of the convex cone $C$ containing $(c_1,c_2,\ldots,c_n)$ in the associated very generic discriminantal arrangement. That is, for $1\leq i_1<i_2<\ldots<i_m<i_{m+1}\leq n$, if $\Gd^m H_{i_1}H_{i_2}\ldots H_{i_m}H_{i_{m+1}}$ is a simplex cell of the hyperplane arrangement $\mcl{H}^m_n$ then it need not give rise to a codimension-one boundary hyperplane of the convex cone $C$ containing $(c_1,c_2,\ldots,c_n)$ in the associated very generic discriminantal arrangement. We finally mention an interesting open-ended remark before the appendix section.

In the appendix section we give a self contained exposition and describe combinatorially the intersection lattice of a (Zariski open and dense) class of ``very generic or sufficiently general" discriminantal arrangements. An important ingredient involved in this exposition is the Crapo's characterization (~\cite{MR0843374}, Section 6, Page 149, Theorem 2) of the matroid of circuits of the configuration of $n$ ``generic" points in $\mbb{R}^m$ as an application of Hall's marriage theorem and a result in linear algebra about a minor and its complementary minor of an orthogonal matrix. We mention this characterization here in detail. Another aspect of this exposition is Conjecture $4.3$ stated in M.~Bayer and K.~Brandt~\cite{MR1456579} and its proof which is given in C.~A.~Athanasiadis~\cite{MR1720104}. We mention this aspect also in detail. As a consequence , we give a geometric description of the lattice elements as sets of concurrencies of the hyperplane arrangements which give the same  ``very generic or sufficiently general" discriminantal arrangement. 
\end{abstract}
\maketitle
\section{\bf{Introduction}}
{\it Concurrency Geometries} have been studied by H.~H.~Crapo in~\cite{MR0766269} in order to solve problems on configurations of hyperplane arrangements in a Euclidean space. Imagine a finite set $\mcl{H}_n^m$ of $n$-hyperplanes in $\mbb{R}^m$ for some $n,m\in \N$ which can move freely and whose normal directions arise from a fixed finite set $\mcl{N}\subs \mbb{R}^m$ of cardinality $n$ which is generic, that is, any subset $\mcl{M}\subseteq \mcl{N}$ of cardinality at most $m$ is linearly independent. They give rise to different hyperplane arrangements in $\mbb{R}^m$. The combinatorial aspects of such configurations of hyperplane arrangements is studied by associating with them another arrangement which is known in the literature as {\it Discriminantal arrangements} or {\it Manin-Schechtman arrangements}  (refer to Page 205, Section 5.6 in P.~Orlik and H.~Terao~\cite{MR1217488}). Some of the authors who have worked on the discriminantal arrangements are  C.~A.~Athanasiadis~\cite{MR1720104}, M.~Bayer and K.~Brandt~\cite{MR1456579}, M.~Falk~\cite{MR1209098}, Yu.~I.~Manin and V.~V.~Schechtman~\cite{MR1097620} and more recently A.~Libgober and S.~Settepanella~\cite{MR3899551}. 

In this paper we prove in first main Theorem~\ref{theorem:SixLines} that, a six-line arrangement, consisting of three pairs of mutually perpendicular lines, does not give rise to a ``sufficiently general" discriminantal arrangement in the sense of C.~A.~Athanasiadis~\cite{MR1720104}.

In second main Theorem~\ref{theorem:SixLinesII}, we prove that there exists a generic six-line arrangement $\mcl{H}^2_6=\{H_i: a_{i1}x_1+a_{i2}x_2=c_i,1\leq i\leq 6\}$ which give rise to very generic discriminantal arrangement $\mcl{C}^6_{\binom{6}{3}}=\{M_{\{i_1,i_2,i_3\}}\mid 1\leq i_1<i_2<i_3\leq 6\}$ such that not every triangular cell (2-simplex region) of $\mcl{H}^2_6$ gives rise to a codimension-one boundary of the convex cone $C$ containing $(c_1,c_2,c_3,c_4,c_5,c_6)$ in the associated very generic discriminantal arrangement $\mcl{C}^6_{\binom{6}{3}}$. 
\section{\bf{Definitions and Statements of the main results}}
We begin with some definitions before stating the main results.
\begin{defn}[A Hyperplane Arrangement, An Essential Hyperplane Arrangement, A Generic Hyperplane Arrangement, A Central Hyperplane Arrangement]
	\label{defn:HA}
	~\\
	Let $m,n$ be positive integers. We say a set 
	\equ{\mcl{H}_n^m=\{H_1,H_2,\ldots,H_n\}} of $n$ affine hyperplanes in $\mbb{R}^m$ forms a 
	hyperplane arrangement. We say that the hyperplane arrangement $\mcl{H}_n^m$ is essential if the normals of the hyperplanes $H_i,\ 1\leq i\leq n$ span $\mbb{R}^m$. We say that they form a generic hyperplane arrangement or a hyperplane arrangement in general position, if Conditions 1,2 are satisfied.
	\begin{itemize}
		\item Condition 1: For $1\leq r \leq m$, the intersection of any $r$ hyperplanes has dimension $m-r$.
		\item Condition 2: For $r>m$, the intersection of any $r$ hyperplanes is empty.
	\end{itemize}
	We say that the hyperplane arrangement $\mcl{H}_n^m$ is central if $\us{i=1}{\os{n}{\cap}}H_i\neq \es$.
\end{defn}
\begin{remark}
A very generic hyperplane arrangement is defined below in Definition~\ref{defn:GenericVeryGeneric} and more specifically in Definition~\ref{defn:GenericVeryGenericArrangements}.
\end{remark}
\begin{defn}[Discriminantal Arrangement-A Central Arrangement]
	\label{defn:CA}
	~\\
	Let $m,n$ be positive integers. Let \equ{\mcl{H}_n^m=\{H_1,H_2,\ldots,H_n\}} 
	be a generic hyperplane arrangement of $n$ hyperplanes in $\mbb{R}^m$. Let the equation for $H_i$ be given by 
	\equ{\us{j=1}{\os{m}{\sum}}a_{ij}x_j=b_i,\text{ with } a_{ij}, b_i\in \mbb{R}, 1\leq j\leq m, 1\leq i\leq n.}
	For every $1\leq i_1<i_2<\ldots<i_{m+1}\leq n$ consider the hyperplane $M_{\{i_1,i_2,\ldots,i_{m+1}\}}$ 
	passing through the origin in $\mbb{R}^n$ in the variables $y_1,y_2,\ldots,y_n$ whose equation is given 
	by 
	\equ{\Det
		\begin{pmatrix}
			a_{i_11} & a_{i_12} & \cdots & a_{i_1(m-1)} & a_{i_1m} & y_{i_1}\\
			a_{i_21} & a_{i_22} & \cdots & a_{i_2(m-1)} & a_{i_2m} & y_{i_2}\\
			\vdots   & \vdots   & \ddots & \vdots       & \vdots   & \vdots\\
			a_{i_{m-1}1} & a_{i_{m-1}2} & \cdots & a_{i_{m-1}(m-1)} & a_{i_{m-1}m} & y_{i_{m-1}}\\
			a_{i_m1} & a_{i_m2} & \cdots & a_{i_m(m-1)} & a_{i_mm} & y_{i_m}\\
			a_{i_{m+1}1} & a_{i_{m+1}2} & \cdots & a_{i_{m+1}(m-1)} & a_{i_{m+1}m} & y_{i_{m+1}}\\
		\end{pmatrix}
		=0}
	Then the associated discriminantal arrangement of hyperplanes passing through the origin in $\mbb{R}^n$ 
	is given by
	\equ{\mcl{C}^n_{\binom{n}{m+1}}=\{M_{\{i_1,i_2,\ldots,i_{m+1}\}}\mid 1\leq 
		i_1<i_2<\ldots<i_{m+1}\leq n\}.}
	It is a central arrangement consisting of hyperspaces, that is, linear subspaces of codimension one in $\mbb{R}^n$. 
\end{defn}
\begin{note}
	Even though the definition of hyperplanes $M_{\{i_1,i_2,\ldots,i_{m+1}\}}$ of the discriminantal arrangement $\mcl{C}^n_{\binom{n}{m+1}}$ involves the coefficients $[a_{ij}]_{1\leq j\leq m,1\leq i\leq n}$ of the variables $x_i,1\leq i\leq m$ 
	we can pick and fix any one set of equations for the hyperplanes $H_i, 1\leq i \leq n$ of the hyperplane arrangement to associate the discriminantal arrangement.
\end{note}
\begin{note}[Convention: Fixing the coefficient matrix]
	\label{note:Convention}
Let $m,n$ be positive integers. Let $\mcl{U}=\{v_i=(a_{i1},a_{i2},\ldots,a_{im})\mid 1\leq i\leq n\}$ be a finite set of vectors in $\mbb{R}^m$ such that any subset $\mcl{V}\subseteq \mcl{U}$ of cardinality at most $m$ is a linearly independent set. We fix the  matrix $[a_{ij}]_{1\leq i\leq n,1\leq j\leq m}\in M_{n\times m}(\mbb{R})$. 
	Let $\mcl{H}_n^m=\{H_1,H_2,\ldots,H_n\}$ be any hyperplane arrangement such that the normal vector of $H_i$ is $v_i,1\leq i\leq n$. When we write equations for the hyperplane $H_i$, we use the fixed matrix and write 
	\equ{H_i:\us{j=1}{\os{m}{\sum}} a_{ij}x_j=b_i \text{ for some }b_i\in \mbb{R}.}
	Using this coefficient matrix, we define the discriminantal arrangement $\mcl{C}^n_{\binom{n}{m+1}}$ which depends only on set $\mcl{U}$. Any such hyperplane arrangement gives a vector $(b_1,b_2,\ldots,b_n)$ and conversely any vector $(b_1,b_2,\ldots,b_n)$ gives such a hyperplane arrangement $\mcl{H}_n^m$.  If the vector $(b_1,b_2,\ldots,b_n)$ lies in the interior of a cone of the discriminantal arrangement $\mcl{C}^n_{\binom{n}{m+1}}$, then the hyperplane arrangement $\mcl{H}_n^m$ is generic. The combinatorics of such arrangements $\mcl{H}_n^m$ is studied by using the discriminantal arrangement $\mcl{C}^n_{\binom{n}{m+1}}$. 
\end{note}
The combinatorics of the discriminantal arrangement -- more precisely the intersection lattice of the discriminantal arrangement -- has been studied very well. The intersection lattice for a certain class of ``sufficiently general" (Definition $2.1$ in~\cite{MR1720104}) discriminantal arrangements which maximises the $f$-vector of the intersection lattice has been already characterised by C.~A.~Athanasiadis (refer to Theorem $2.3$ in~\cite{MR1720104}). An important ingredient in its proof is the Crapo's characterisation of the matroid $M(n,m)$ of circuits of the configuration of $n$-generic points in $\mbb{R}^m$. This matroid is introduced in H.~H.~Crapo~\cite{MR0766269} and characterised in H.~H.~Crapo~\cite{MR0843374}, Chapter 6, when the coordinates of the $n$-points are generic indeterminates, as the Dilworth completion of $D_m(B_n)$ of the $m^{th}$-lower truncation of the Boolean algebra of rank $n$ (see H.~H.~Crapo and G.~C.~Rota~\cite{MR0290980}, Chapter 7). The intersection lattice of ``sufficiently general" discriminantal arrangements coincides with the lattice $L(n,m)$ of flats of $M(n,m)$. In C.~A.~Athanasiadis~\cite{MR1720104}, it is proved that this lattice is isomorphic to the lattice $P(n,m)$ (refer to Theorem $3.2$ in~\cite{MR1720104}).  $P(n,m)$ is the collection of all sets of the form $\mcl{S}=\{S_1,S_2,\ldots,S_r\}$, where $S_i\subseteq\{1,2,\ldots,n\}$, each of cardinality at least $m+1$, such that 
\equ{\mid \us{i\in I}{\bigcup}S_i \mid>m+\us{i\in I}{\sum}(\mid S_i\mid-m)}
for all $I\subseteq \{1,2,\ldots,r\}$ with $\mid I\mid \geq 2$.  They partially order $P(n,m)$ by letting $\{S_1,S_2,\ldots,S_r\}=\mcl{S}\leq \mcl{T}=\{T_1,T_2,\ldots,T_p\}$, if, for each $1\leq i\leq r$ there exists $1\leq j\leq p$ such that $S_i\subseteq T_j$.  This isomorphism between the lattices $L(n,m)$ and $P(n,m)$ was initially conjectured (refer to Definition $4.2$ and Conjecture $4.3$) in M.~Bayer and K.~Brandt~\cite{MR1456579}.

\begin{defn}[Generic Sets, Very Generic Sets and Very Generic Hyperplane Arrangements]
\label{defn:GenericVeryGeneric}
Let $m,n$ be positive integers. A finite set $\mcl{U}=\{v_i=(a_{i1},a_{i2},\ldots,a_{im})\mid 1\leq i\leq n\}\subs \mbb{R}^m$ is said to be generic if any subset $\mcl{V}\subseteq \mcl{U}$ of cardinality at most $m$ is a linearly independent set. A generic set $\mcl{U}$ is said to be very generic if it gives rise to a ``sufficiently general" discriminantal arrangement in sense of C.~A.~Athanasiadis (Definition $2.1$ in~\cite{MR1720104}). Also refer to Definition~\ref{defn:GenericVeryGenericArrangements} for more details. This implies that the discriminantal arrangement is ``very generic" in the sense of M.~Bayer and K.~Brandt (Definition $4.2$  in~\cite{MR1456579}) by using Theorem 2.3 in~\cite{MR1720104}. A generic hyperplane arrangement $\mcl{H}_n^m=\{H_i:\us{j=1}{\os{m}{\sum}} a_{ij}x_j=b_i\mid 1\leq i\leq n\}$ in $\mbb{R}^m$ is very generic if  $\mcl{U}=\{v_i=(a_{i1},a_{i2},\ldots,a_{im})\mid 1\leq i\leq n\}\subs \mbb{R}^m$ is a very generic set.
The space $\mcl{O}(n,m)$ of very generic hyperplane arrangements is zariski open and dense in the space of all generic hyperplane arrangements. 
\end{defn}
We state the main theorems.
\begin{thmA}
\namedlabel{theorem:SixLines}{A}
Let $\mcl{M}=\{v_i\mid 1\leq i\leq 6\}\subs \mbb{R}^2$ be a generic set such that the set $\mcl{M}$ gives rise to six-line arrangements where there are three pairs of mutually perpendicular lines. Then $\mcl{M}$ is not very generic, that is, it gives rise to a discriminantal arrangement which is not a ``sufficiently general" discriminantal arrangement in the sense of C.~A.~Athanasiadis (Definition $2.1$ in~\cite{MR1720104}).
\end{thmA}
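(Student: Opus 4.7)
The plan is to exhibit a codimension-one drop among four specific hyperplanes of the associated discriminantal arrangement that is forced by the perpendicular structure, and then to argue that such a collapse is forbidden in a ``sufficiently general'' arrangement by the Athanasiadis description via $P(6,2)$.

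After an orthogonal change of coordinates and an individual rescaling of each $v_i$ (operations which preserve both the three-pair perpendicularity and the isomorphism type of the intersection lattice of the discriminantal arrangement), I may assume
\[
v_1 = (1,0),\ v_2 = (0,1),\ v_3 = (\cos\alpha, \sin\alpha),\ v_4 = (-\sin\alpha, \cos\alpha),
\]
\[
v_5 = (\cos\beta, \sin\beta),\ v_6 = (-\sin\beta, \cos\beta),
\]
with $\alpha, \beta$ generic so that $\sin\alpha, \sin\beta, \sin(\alpha-\beta), \cos(\alpha-\beta) \ne 0$ and $\mcl{M}$ is indeed generic. Among the eight triples obtained by choosing one index from each perpendicular pair $\{1,2\}$, $\{3,4\}$, $\{5,6\}$, I focus on the four ``even-parity'' triples
\[
T_1=\{1,3,5\},\ T_2=\{1,4,6\},\ T_3=\{2,3,6\},\ T_4=\{2,4,5\},
\]
in which each index of $\{1,\dots,6\}$ appears in exactly two of the four triples.

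The computational heart of the proof is to expand the four defining $3\times 3$ determinants and verify the linear identity
\[
F_{T_1} = F_{T_2} + F_{T_3} + F_{T_4}
\]
among the linear forms $F_{T_i}$ on $\mbb{R}^6$ that define the hyperplanes $M_{T_i}$. A direct calculation using $\cos(\alpha - \beta) = \cos\alpha\cos\beta + \sin\alpha\sin\beta$ and $\sin(\alpha - \beta) = \sin\alpha\cos\beta - \cos\alpha\sin\beta$ shows that on the right-hand side the coefficients of $y_2, y_4, y_6$ cancel pairwise while those of $y_1, y_3, y_5$ combine to reproduce $F_{T_1}$ exactly.

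To conclude, I translate this into the lattice $P(6,2)$: the subset $\{T_2, T_3, T_4\}$ satisfies the defining $P(n,m)$-inequality on every sub-index and is a rank-$3$ lattice element, whereas $\{T_1, T_2, T_3, T_4\}$ fails the inequality with equality ($|T_1 \cup T_2 \cup T_3 \cup T_4| = 6 = 2 + 4$), and its lattice-closure is the single set $\{\{1,\dots,6\}\}$, of rank $4$. Hence in any sufficiently general arrangement $M_{T_1}\cap M_{T_2}\cap M_{T_3}\cap M_{T_4}$ would have codimension $4$, strictly larger than the codimension $3$ of $M_{T_2}\cap M_{T_3}\cap M_{T_4}$. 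However, the identity $F_{T_1} = F_{T_2} + F_{T_3} + F_{T_4}$ forces $M_{T_2}\cap M_{T_3}\cap M_{T_4} \subseteq M_{T_1}$, so the two intersections coincide in our situation, both of codimension $3$. The intersection lattice therefore cannot be isomorphic to $P(6,2)$, and $\mcl{M}$ is not very generic.

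The main obstacle is spotting the correct Klein-four family $\{T_1, T_2, T_3, T_4\}$ --- namely the $2$-regular sub-hypergraph on the eight perpendicular-pair transversals --- and then tracking signs carefully in the four determinant expansions so that the forms really add correctly. The subsequent combinatorial comparison with $P(6,2)$ is a direct application of the Athanasiadis characterization quoted in the excerpt.
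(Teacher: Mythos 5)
Your proposal is correct, but it reaches the conclusion by a genuinely different route from either of the paper's two proofs. The paper's first proof is an elementary perturbation argument (openness of the ``sufficiently general'' locus plus the impossibility of a certain order of intersection points, Proposition~\ref{prop:OrderofPoints}); its second proof translates the six lines so that three of them form a triangle and the other three its altitudes (Corollary~\ref{cor:Acute}) and then observes that the four resulting concurrency sets violate the defining inequality of $P(6,2)$, since $6 \not> 2+4$. You instead stay entirely on the discriminantal side: after the harmless normalization $v_1=(1,0)$, $v_2=(0,1)$, $v_{2i+1}=(\cos\theta,\sin\theta)$, $v_{2i+2}=(-\sin\theta,\cos\theta)$ (rotation leaves all $2\times 2$ brackets, hence all forms $F_S$, unchanged, and rescaling acts by a diagonal automorphism of $\mbb{R}^6$, so ``sufficiently general'' is preserved), you exhibit the explicit relation $F_{T_1}=F_{T_2}+F_{T_3}+F_{T_4}$ for the even-parity transversals $T_1=\{1,3,5\}$, $T_2=\{1,4,6\}$, $T_3=\{2,3,6\}$, $T_4=\{2,4,5\}$ --- I checked the identity and it is exactly right with your conventions --- while this four-element family is independent in the Dilworth matroid $D(U_{2,6})$ (all unions satisfy $|\cup_{j\in J}T_j|\geq 2+|J|$), so for a sufficiently general arrangement the four hyperplanes would meet in codimension $4$, not $3$. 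This is in essence the dual, algebraic form of the same obstruction the paper uses: your relation says that whenever $L_1,L_4,L_6$, then $L_2,L_3,L_6$, then $L_2,L_4,L_5$ concur, so do $L_1,L_3,L_5$, i.e.\ it is the orthocenter theorem for the triangle $L_2L_4L_6$, and your family of triples is exactly the complementary transversal family to the paper's four concurrency sets. What your route buys is that it avoids the translation and acute-triangle lemmas entirely and works uniformly in the normalized parameters; what the paper's second proof buys is that no determinant computation is needed, only the count $|\cup S_i|=6=2+\sum(|S_i|-2)$. One small presentational point: the final step should invoke not just an abstract isomorphism of the intersection lattice with $P(6,2)$ but the natural correspondence $S\mapsto M_S$ under which Dilworth-independent families give linearly independent forms (this is precisely Athanasiadis's Theorem 2.3, or Theorem~\ref{theorem:Represent} of the appendix), which is indeed what you implicitly use.
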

\begin{remark}
	\label{remark:FirstTheorem}
We will give two proofs of Theorem~\ref{theorem:SixLines}. The first one uses elementary techniques and only uses the fact that the space of ``sufficiently general" discriminantal arrangements is an open set in the space of discriminantal arrangements associated to line arrangements where the lines are in general position and a topological fact in Remark~\ref{remark:TopFact} which can be deduced. This proof also reveals much of the content of Theorem~\ref{theorem:SixLines}. The second proof uses the result of C.~A.~Athanasiadis~\cite{MR1720104} that the intersection lattice of a "sufficiently general" discriminantal arrangement is isomorphic to the lattice $P(n,m)$ (refer to Theorem $3.2$ in~\cite{MR1720104}). This proof does not reveal much, the content of Theorem~\ref{theorem:SixLines}.
\end{remark}
\begin{thmB}
	\namedlabel{theorem:SixLinesII}{B}
	There exists a generic six-line arrangement $\mcl{H}^2_6=\{H_i: a_{i1}x_1+a_{i2}x_2=c_i,1\leq i\leq 6\}$ which give rise to a very generic or a sufficiently general discriminantal arrangement $\mcl{C}^6_{\binom{6}{3}}=\{M_{\{i_1,i_2,i_3\}}\mid 1\leq i_1<i_2<i_3\leq 6\}$, that is, the set $\mcl{V}=\{(a_{i1},a_{i2})\mid 1\leq i\leq 6\}$ is a very generic set, such that not every triangular cell (2-simplex region) of $\mcl{H}^2_6$ gives rise to a codimension-one boundary of the convex cone $C$ containing $(c_1,c_2,c_3,c_4,c_5,c_6)$ in the associated very generic discriminantal arrangement $\mcl{C}^6_{\binom{6}{3}}$.  
\end{thmB}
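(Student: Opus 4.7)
The plan is to prove Theorem B by an explicit construction: exhibit a concrete six-line arrangement in $\mathbb{R}^2$ whose coefficient set is very generic, together with a triangular cell whose associated discriminantal hyperplane is \emph{not} a facet of the chamber of $(c_1, \ldots, c_6)$. Since very-genericity of $\mathcal{V} = \{v_i = (a_{i1}, a_{i2})\}$ is a Zariski-open dense condition by Definition~\ref{defn:GenericVeryGeneric}, it suffices to fix concrete numerical values for the normals and intercepts; a sufficiently ``random'' choice of slopes will automatically lie in the very-generic locus, and can be verified a posteriori against Athanasiadis's combinatorial description of the intersection lattice via the lattice $P(6,2)$ in the appendix. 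The intercepts $c_i$ are then chosen to place the arrangement in a configuration producing several triangular (2-simplex) cells.

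The analysis then proceeds as follows. For each $3$-subset $S = \{i,j,k\} \subseteq \{1,\dots,6\}$ define
\[
\Det_S(y) \;=\; \det\begin{pmatrix} a_{i1} & a_{i2} & y_i \\ a_{j1} & a_{j2} & y_j \\ a_{k1} & a_{k2} & y_k \end{pmatrix},
\]
whose zero locus is $M_S$. The chamber $C$ containing $c = (c_1,\dots,c_6)$ is determined by the sign vector $\sigma(S) = \mathrm{sign}\,\Det_S(c)$ across the $20$ triples. By definition, $M_S$ is a codimension-one facet of $C$ if and only if the system ``$\Det_S(y)=0$ and $\mathrm{sign}\,\Det_{S'}(y)=\sigma(S')$ for all $S' \neq S$'' is feasible in $\mathbb{R}^6$. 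I would enumerate the triangular cells of $\mathcal{H}^2_6$ and then exhibit one triangular cell $\Delta H_{i_1} H_{i_2} H_{i_3}$ for which the above linear system is infeasible --- the infeasibility certified (via Farkas's lemma) by a nonnegative combination of the sign constraints that pins $\Det_{\{i_1,i_2,i_3\}}$ to a strictly nonzero value. This would translate the abstract claim of ``not being a facet'' into a finite, checkable linear-algebraic witness.

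The main obstacle is locating a concrete configuration where this non-facet phenomenon actually manifests. Most triangular cells typically do correspond to facets, so the example must be engineered carefully. A promising heuristic is to arrange the six lines so that two triangular cells share an edge lying close to a third line, forcing any deformation that collapses one triangle into a single point of concurrence to first pass through another concurrence involving the nearby line; equivalently, $\Delta H_{i_1} H_{i_2} H_{i_3}$ cannot be shrunk to zero area without some other triple $\{p,q,r\}$ becoming concurrent first. Once such a configuration is found, verification reduces to a direct but somewhat tedious sign calculation, which --- together with the enumeration of triangular cells --- constitutes the bulk of the proof.
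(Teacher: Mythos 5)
Your overall strategy is sound --- the facet criterion you state (that $M_S$ is a codimension-one boundary of the chamber of $c$ if and only if ``$\Det_S(y)=0$, $\mathrm{sign}\,\Det_{S'}(y)=\sigma(S')$ for all $S'\neq S$'' is feasible) is correct, and your heuristic that non-facetness means the triangle cannot be collapsed to a point before some other triple becomes concurrent is exactly the right geometric picture. But as written the proposal has a genuine gap: the entire content of Theorem~\ref{theorem:SixLinesII} is the \emph{existence} of a configuration and a triangular cell exhibiting the non-facet phenomenon, and you explicitly leave that existence open (``the main obstacle is locating a concrete configuration\dots{} once such a configuration is found\dots''). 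No example is produced, no infeasibility (Farkas) certificate is computed, and no verification of very-genericity of the chosen normals is carried out, so nothing is actually proved. Note also that your claim that a ``sufficiently random'' choice of slopes can be certified very generic ``a posteriori'' is a nontrivial finite computation in its own right (one must check that the $20$-hyperplane discriminantal arrangement realizes the maximal intersection lattice $P(6,2)$, i.e.\ the non-vanishing conditions of Equation~\ref{Eq:VeryGeneric}); the paper's own computations show that plausible-looking arrangements can fail this.

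For comparison, the paper supplies precisely the missing ingredient in Proposition~\ref{prop:SixLines}: it fixes the explicit lines $L_1:y=0$, $L_2:x-2y=7$, $L_3:-2x+y=4$, $L_4:x=0$, $L_5:5x+y=2$, $L_6:x+y=-3$ and the triangular cell on $\{1,2,5\}$, and shows by elementary inequalities on the translation parameters $a,b,c,d$ (forced by the prescribed orders of intersection points on the axes and the other lines) that the vertex $L_2\cap L_5$ can never cross into the first quadrant, i.e.\ the orientation of that triangle can never flip; hence $M_{\{1,2,5\}}$ is not a facet of the cone containing $(0,7,4,0,2,-3)$. It then sidesteps certifying very-genericity of this specific normal set by a density-plus-openness argument: very generic normal sets are dense, and the non-facet property is open in the data, so a small perturbation yields a genuinely very generic arrangement with the same non-facet wall. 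If you want to complete your version, you must either carry out your Farkas/sign computation on a concrete, certified very generic example, or adopt an explicit example together with the perturbation argument as the paper does.
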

\section{\bf{First Proof of the First Main Theorem}}
We begin with a lemma.
\begin{lemma}
Let $0<m_1<m_2$ be two real numbers. Let $\mcl{M}=\{v_1=(0,1),v_2=(-\frac 1{m_1},1),v_3=(-\frac 1{m_2},1),v_4=(1,0),v_5=(m_1,1),v_6=(m_2,1)\}$ be a generic set containing six vectors in $\mbb{R}^2$. Then for any generic six-line arrangement $\mcl{L}_6^2=\{L_1,L_2,L_3,L_4,L_5,L_6\}$ such that a normal vector of $L_i$ is $v_i$, there exists three subscripts $1\leq i<j<k\leq 6$ such that the triangle $\Gd L_iL_jL_k$, that is, the triangle formed by the lines or sides $L_i,L_j,L_k$ is an acute-angled triangle. 
\end{lemma}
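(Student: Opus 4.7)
The plan is to exhibit a single triple $(i,j,k)$ for which $\Delta L_iL_jL_k$ is acute \emph{regardless} of how the offsets $b_i$ in the line equations are chosen. The crucial observation is that the interior angles of the triangle bounded by three lines in general position depend only on the directional angles of those lines modulo $\pi$, not on their offsets; hence for acuteness one can work with directions alone.

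First I would convert each normal $v_i$ into its line's directional angle in $[0,\pi)$. Setting $\alpha=\arctan(1/m_1)$ and $\beta=\arctan(1/m_2)$, the hypothesis $0<m_1<m_2$ yields $0<\beta<\alpha<\pi/2$, and by the identity $\arctan(t)+\arctan(1/t)=\pi/2$ for $t>0$ the six lines acquire directional angles
\equ{L_1:0,\quad L_3:\beta,\quad L_2:\alpha,\quad L_4:\tfrac{\pi}{2},\quad L_6:\tfrac{\pi}{2}+\beta,\quad L_5:\tfrac{\pi}{2}+\alpha.}
The three perpendicular pairs $(L_1,L_4),(L_2,L_5),(L_3,L_6)$ appear as the three pairs whose angles differ by exactly $\pi/2$, which already rules out any triple containing such a pair from being acute.

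Next I would invoke the elementary fact that if three lines have sorted directional angles $\theta_1<\theta_2<\theta_3$ in $[0,\pi)$, then the interior angles of the triangle they bound are $\theta_2-\theta_1$, $\theta_3-\theta_2$, and $\pi-(\theta_3-\theta_1)$. Applying this to $\{L_1,L_2,L_6\}$ with sorted angles $(0,\alpha,\tfrac{\pi}{2}+\beta)$ gives interior angles
\equ{\alpha,\quad \tfrac{\pi}{2}+\beta-\alpha,\quad \tfrac{\pi}{2}-\beta.}
Each lies strictly in $(0,\pi/2)$: the three upper bounds reduce respectively to $\alpha<\tfrac{\pi}{2}$, $\beta<\alpha$, and $\beta>0$; the only nontrivial positivity claim (for the middle term) comes from $\alpha<\tfrac{\pi}{2}<\tfrac{\pi}{2}+\beta$. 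All of these hold by hypothesis, so $(i,j,k)=(1,2,6)$ is a valid witness.

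The only real obstacle is bookkeeping -- correctly placing the six angles on $\mathbb{R}/\pi\mathbb{Z}$ and pointing the inequalities the right way given $0<m_1<m_2$. Once this ordering is pinned down, the verification is three one-line inequalities. One can also note, via the $\pi/2$-rotation symmetry swapping $L_i\leftrightarrow L_{i+3}$ (which preserves the perpendicularity structure), that $(3,4,5)$ yields another acute triangle, but a single witness suffices for the lemma.
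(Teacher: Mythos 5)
Your proposal is correct, but it proves the lemma by a genuinely different route than the paper. The paper fixes $L_1$ and $L_4$ as the coordinate axes, introduces the intersection points $A=L_1\cap L_2$, $B=L_1\cap L_6$, $C=L_2\cap L_6$, $D=L_4\cap L_2$, $E=L_4\cap L_6$, observes that the angles at $A,B,D,E$ are acute, and then argues by dichotomy on the angle at $C$: if it is acute then $\Delta L_1L_2L_6$ is acute, and if it is obtuse then $\Delta L_4L_2L_6$ is acute. You instead eliminate the offsets entirely via the fact that the interior angles of the triangle cut out by three lines depend only on their directions, with sorted direction angles $\theta_1<\theta_2<\theta_3$ giving interior angles $\theta_2-\theta_1$, $\theta_3-\theta_2$, $\pi-(\theta_3-\theta_1)$, and then verify by three strict inequalities that one fixed triple is acute for every choice of offsets; this removes the case analysis and yields the slightly stronger statement that a single universal witness works. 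The trade-off is that your argument leans on that unproved (though true and elementary) angle formula, which deserves a one-line justification, e.g.\ translating one line parallel to itself maps the triangle to a homothetic copy centered at the opposite vertex, so the angles are offset-independent and can be computed for any convenient offsets. One bookkeeping remark: you read the normals literally from the statement, so $L_2$ has slope $1/m_1$ and your witness is $\{L_1,L_2,L_6\}$ (with $\{L_3,L_4,L_5\}$ as the rotated companion), whereas the paper's proof assigns slope $m_1$ to $L_2$ and slope $-1/m_1$ to $L_5$, under which labeling the universally acute triples are $\{L_1,L_3,L_5\}$ and $\{L_2,L_4,L_6\}$; this discrepancy lies in the paper's own slope convention, not in your argument, and is immaterial since the perpendicularity structure $L_i\perp L_{i+3}$ is the same.
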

\begin{proof}
We can assume that the line $L_1$ is the X-axis, $L_4$ is the Y-axis, $L_i$ has slope $m_{i-1}$ for $i=2,3$, $L_j$ has slope $-\frac 1{m_{j-4}}$ for $j=5,6$. So $L_i\perp L_{i+3}$ for $i=1,2,3$. Let $A=L_1\cap L_2,B=L_1\cap L_6, C=L_2\cap L_6, D= L_4\cap L_2, E=L_4\cap L_6$. Then the angles $\measuredangle CAB, \measuredangle ABC, \measuredangle CDE, \measuredangle CED$ are acute angles. If $\measuredangle ACB$ is also acute then the triangle $\Gd L_1L_2L_6$ is an acute-angled triangle. If $\measuredangle ACB$ is obtuse then the angle $\measuredangle DCE$ is acute. Hence the triangle $\Gd L_4L_2L_6$ is an acute-angled triangle. This proves the lemma.	
\end{proof}
Now we have the corollary given below follows using the above lemma.
\begin{cor}
	\label{cor:Acute}
In any generic six-line arrangement with three pairs of mutually perpendicular lines, the lines can be individually translated to form an arrangement of six lines such that some three lines form an acute-angled triangle and the remaining three lines are the altitudes of the triangle.  
\end{cor}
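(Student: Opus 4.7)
The plan is to deduce this corollary from the preceding lemma in two stages: a coordinate-normalisation step that brings an arbitrary arrangement into the form handled by the lemma, followed by a direct geometric translation argument exploiting the perpendicular-pair structure.

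Given any generic six-line arrangement $\{L_1,\ldots,L_6\}$ in $\mbb{R}^2$ consisting of three pairs of mutually perpendicular lines, I would first apply a rigid rotation so that one of the three perpendicular pairs coincides with the $X$-axis and $Y$-axis. After a possible reflection across one axis and a relabeling within and among the remaining two pairs, the four remaining normals can be arranged into the precise pattern required by the lemma, with slope parameters $0<m_1<m_2$. The lemma then guarantees that three of the six lines, say $L_i,L_j,L_k$, already bound an acute-angled triangle $T$ in the untranslated configuration.

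The key observation at this point is that $L_i,L_j,L_k$ must come from three distinct perpendicular pairs: if two of them belonged to the same pair they would meet at a right angle, and any triangle with a right angle at one vertex fails to be acute. Consequently, the three remaining lines $L_{i'},L_{j'},L_{k'}$ are precisely the perpendicular partners of $L_i,L_j,L_k$, so that $L_{i'}\perp L_i$, $L_{j'}\perp L_j$, and $L_{k'}\perp L_k$. I would then translate $L_{i'}$ parallel to itself until it passes through the vertex $L_j\cap L_k$ of $T$ opposite the side $L_i$; since $L_{i'}$ is perpendicular to the supporting line of that side, the translated line is by definition the altitude of $T$ from the chosen vertex. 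Applying the analogous translations to $L_{j'}$ and $L_{k'}$ yields the other two altitudes, producing the desired configuration.

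The only nontrivial obstacle in the plan is the bookkeeping in the first step, namely confirming that every generic six-line arrangement with three perpendicular pairs can be brought into the exact slope pattern of the lemma by a composition of rotation, reflection, and permutation of labels (including the possibility that the slopes of the non-axis lines have mixed signs, which is handled by the reflection). Once this normalisation is in place, the remaining content of the corollary is essentially immediate from the lemma together with the elementary characterisation of an altitude as the line through a vertex perpendicular to the supporting line of the opposite side.
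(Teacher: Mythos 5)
Your proposal is correct and follows essentially the same route the paper intends: the paper simply asserts that the corollary follows from the preceding lemma, and your write-up supplies the natural missing details (rotating one perpendicular pair onto the axes to match the lemma's slope pattern, noting that an acute triangle cannot use two lines of the same perpendicular pair, and translating each remaining partner line through the opposite vertex to realize it as an altitude). Indeed, in the lemma's proof the acute triangle produced ($\Gd L_1L_2L_6$ or $\Gd L_4L_2L_6$) already uses one line from each pair, so your argument matches the paper's implicit reasoning.
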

Now we prove two propositions regarding such six-line arrangements.
\begin{prop}
Consider Figure~\ref{fig:One} which is obtained by translating the altitudes of the $\Gd L_1L_3L_5$  to $L_4,L_6,L_2$ in a manner given in the figure. 
	\begin{figure}[h]
	\centering
	\includegraphics[width = 0.4\textwidth]{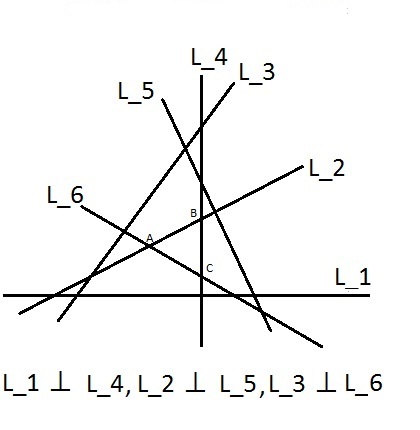}
	\caption{A Generic Six-Line Arrangement with Three Pairs of Mutually Perpendicular Lines}
	\label{fig:One}
\end{figure}
Then the distance of the perpendicular dropped from $A=L_2\cap L_6$ to the line $L_4$ is greater than the distance of the perpendicular dropped from $L_3\cap L_5$ to the line $L_4$. The distance of the perpendicular dropped from $B=L_2\cap L_4$ to the line $L_6$ is greater than the distance of the perpendicular dropped from $L_1\cap L_5$ to the line $L_6$. The distance of the perpendicular dropped from $C=L_4\cap L_6$ to the line $L_2$ is greater than the distance of the perpendicular dropped from $L_1\cap L_3$ to the line $L_2$.  	
\end{prop}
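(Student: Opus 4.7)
The plan is to reduce the three inequalities to explicit linear-algebraic relations via coordinates. I would place the acute triangle $\Delta L_1L_3L_5$ with $L_1$ as the $x$-axis, $V_{13} = L_1 \cap L_3 = (0,0)$, $V_{15} = L_1 \cap L_5 = (a,0)$, and $V_{35} = L_3 \cap L_5 = (p,q)$ for $a>0$, $0<p<a$, $q>0$ (the acuteness of the triangle translates into explicit inequalities on $a, p, q$). In these coordinates the three altitudes have equations $h_{35}\colon x = p$, $h_{15}\colon px + qy = pa$, and $h_{13}\colon (p-a)x + qy = 0$.

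Setting $|L_3| := \sqrt{p^2+q^2}$ and $|L_5| := \sqrt{(a-p)^2+q^2}$, translating each altitude along the direction of the side to which it is perpendicular, by signed amounts $t_4, t_6, t_2$ dictated by the figure, gives
\begin{align*}
L_4\colon\ & x = p + t_4,\\
L_6\colon\ & px + qy = pa + t_6 |L_3|,\\
L_2\colon\ & (p-a)x + qy = t_2 |L_5|.
\end{align*}
From these equations one reads off immediately $\mathrm{dist}(V_{35}, L_4) = |t_4|$, $\mathrm{dist}(V_{15}, L_6) = |t_6|$, and $\mathrm{dist}(V_{13}, L_2) = |t_2|$. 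Solving the three pairwise linear systems produces the new vertices $A = L_2 \cap L_6$, $B = L_2 \cap L_4$, $C = L_4 \cap L_6$, and a short simplification yields
\begin{align*}
\mathrm{dist}(A, L_4) &= \left| t_4 - \tfrac{t_6 |L_3| - t_2 |L_5|}{a} \right|,\\
\mathrm{dist}(B, L_6) &= \left| t_6 - \tfrac{a t_4 + t_2 |L_5|}{|L_3|} \right|,\\
\mathrm{dist}(C, L_2) &= \left| t_2 - \tfrac{t_6 |L_3| - a t_4}{|L_5|} \right|.
\end{align*}

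To finish, I would read the signs of $t_4, t_6, t_2$ off Figure~\ref{fig:One}. The figure is drawn so that in each of the three formulas the correction enters with the sign that augments $|t_i|$ rather than cancels it; concretely, the sign pattern $t_2>0$, $t_4>0$, $t_6<0$ (or another consistent choice indicated by the figure) turns the three right-hand sides into $t_4 + \tfrac{t_2|L_5| + |t_6||L_3|}{a}$, $|t_6| + \tfrac{a t_4 + t_2|L_5|}{|L_3|}$, and $t_2 + \tfrac{a t_4 + |t_6||L_3|}{|L_5|}$, each strictly greater than $|t_4|$, $|t_6|$, $|t_2|$ respectively. The main obstacle is pinning down the correct sign convention from the figure and verifying that the three choices are mutually consistent (a sanity check shows that with all $t_i>0$ the first inequality can fail when $|L_3|>|L_5|$, so the figure's prescription really is doing work); once the signs are fixed, all three strict inequalities follow simultaneously from the explicit formulas.
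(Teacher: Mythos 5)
Your coordinate set-up and the algebra are correct: taking $V_{13}=(0,0)$, $V_{15}=(a,0)$, $V_{35}=(p,q)$, the three translated altitudes are as you write them, the three ``old'' distances are $|t_4|,|t_6|,|t_2|$, and I have checked that your three formulas for $\mathrm{dist}(A,L_4)$, $\mathrm{dist}(B,L_6)$, $\mathrm{dist}(C,L_2)$ are right. The genuine gap is exactly the step you flag and then skip: you never determine which signs of $t_2,t_4,t_6$ the configuration of Figure~\ref{fig:One} actually forces, and you hedge with ``or another consistent choice indicated by the figure.'' Since (as your own sanity check shows) the asserted inequalities are \emph{false} for other sign patterns, the entire geometric content of the proposition sits in this unproved step; as written, what you prove is the conditional statement ``if the translations have signs $t_2>0,\,t_4>0,\,t_6<0$ then the three inequalities hold,'' not the proposition itself. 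The gap is fixable, and your guessed pattern is in fact the right one: from the order of intersection points in Figure~\ref{fig:One} (recorded in the next proposition of the paper), $L_2$ meets $L_1$ on the far side of $L_3\cap L_1$ from $L_5\cap L_1$, which in your coordinates reads $t_2|L_5|/(p-a)<0$, i.e.\ $t_2>0$; $L_6$ meets $L_1$ strictly to the left of $L_5\cap L_1=(a,0)$, i.e.\ $(pa+t_6|L_3|)/p<a$, giving $t_6<0$; and $L_4$ meets $L_5$ between $L_3\cap L_5$ and $L_1\cap L_5$, i.e.\ $p+t_4>p$, giving $t_4>0$. With those three facts supplied, your formulas do close the argument, and all three inequalities drop out simultaneously.

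For comparison, the paper's proof works in coordinates adapted to the perpendicular pair $L_1,L_4$ (taken as the axes), writes $L_2,L_3$ as $y=m_ix+c_i$ and $L_5,L_6$ as $y=-x/m_i+d_i$, reads the ordering data $c_2>d_1>c_1>d_2>0$ and $-c_1/m_1<-c_2/m_2<0<d_2m_2<d_1m_1$ off the figure, and deduces the first inequality from a short chain of manipulations (treating the other two as similar). Your parametrization by signed translation amounts is arguably cleaner and more symmetric --- it handles all three inequalities at once and isolates precisely what input the figure must provide (three signs) --- but that input still has to be extracted from the figure, which is the piece currently missing from your write-up.
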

\begin{proof}
We prove that, the distance of the perpendicular dropped from $A=L_2\cap L_6$ to the line $L_4$ is greater than the distance of the perpendicular dropped from $L_3\cap L_5$ to the line $L_4$. The proof of the remaining two assertions are similar.

Let $L_1$ be the X-axis, $L_4$ be the Y-axis. Let the equation of $L_i$ be $y=m_{i-1}x+c_{i-1}$ for $i=2,3$ for some real numbers $m_1,m_2,c_1,c_2$. Then the equation of $L_j$ is $y=-\frac 1{m_{j-4}}x+d_{j-4}$ for $j=5,6$ and some real numbers $d_1,d_2$.
Now we observe from the figure that $0<m_1<m_2$ and using the points on the Y-axis we have $c_2>d_1>c_1>d_2>0$. Also we have $-\frac {c_1}{m_1}<-\frac {c_2}{m_2}<0<d_2m_2<d_1m_1$ using the points on the X-axis. The distance of the perpendicular dropped from $A=L_2\cap L_6$ to the line $L_4$ is $\frac{m_2(c_1-d_2)}{1+m_1m_2}$. The distance of the perpendicular dropped from $L_3\cap L_5$ to the line $L_4$ is $\frac{m_1(c_2-d_1)}{1+m_1m_2}$. Now we observe the following.
\equa{&\frac {c_1}{m_1}-\frac {c_2}{m_2}>0>\frac{d_2m_2-d_1m_1}{m_1m_2}=\frac{d_2}{m_1}-\frac{d_1}{m_2} \Ra \frac{c_1-d_2}{m_1}>\frac{c_2-d_1}{m_2}\Ra\\ &m_2(c_1-d_2)>m_1(c_2-d_1) \Ra \frac{m_2(c_1-d_2)}{1+m_1m_2}>\frac{m_1(c_2-d_1)}{1+m_1m_2}.}
This proves the proposition.
\end{proof}	
\begin{prop}
	\label{prop:OrderofPoints}
Consider Figure~\ref{fig:One} and Figure~\ref{fig:Two}.
\begin{figure}[h]
	\centering
	\includegraphics[width = 0.4\textwidth]{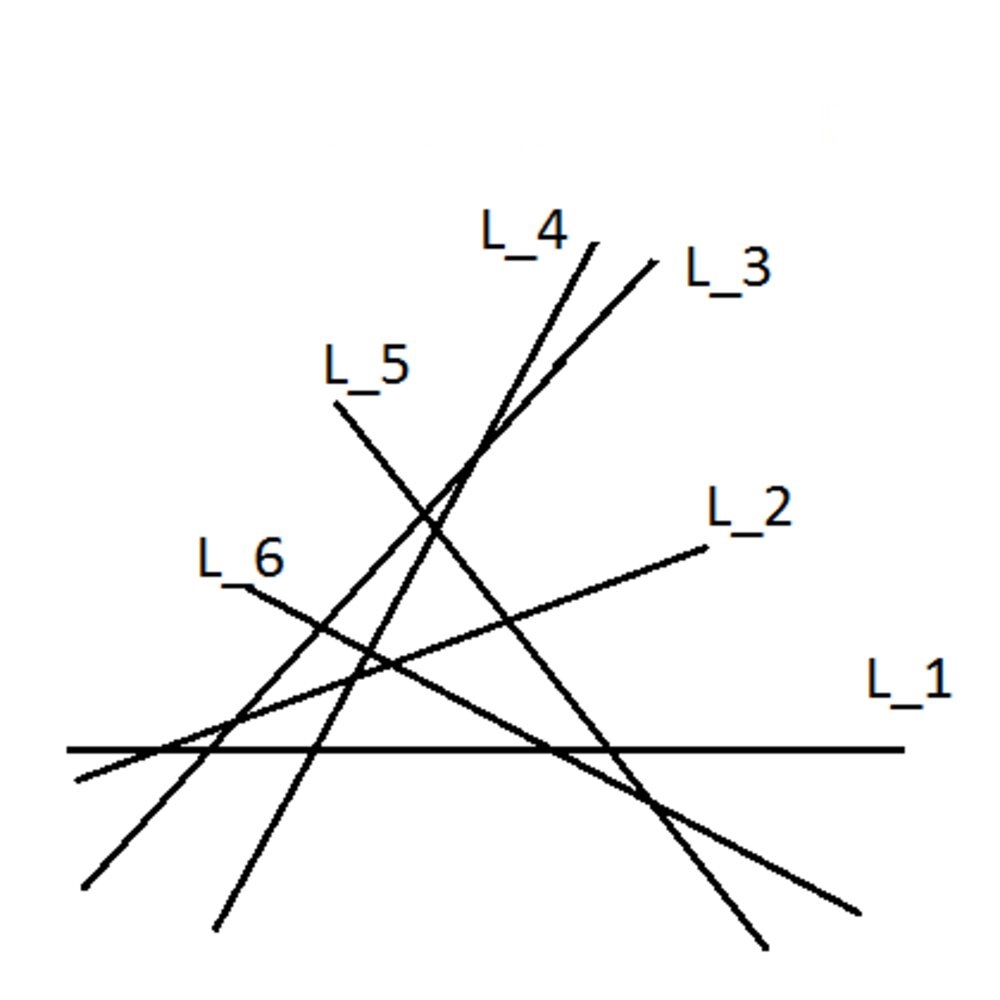}
	\caption{A Generic Six-Line Arrangement}
	\label{fig:Two}
\end{figure}
Suppose the order of points 
\begin{itemize}
	\item on $L_1$ is given by $L_2\cap L_1, L_3\cap L_1, L_4\cap L_1, L_6\cap L_1,L_5\cap L_1$,
	\item on $L_3$ is given by $L_1\cap L_3,L_2\cap L_3,L_6\cap L_3, L_5\cap L_3,L_6\cap L_3$,
	\item on $L_5$ is given by $L_3\cap L_5,L_4\cap L_5,L_2\cap L_5, L_1\cap L_5,L_6\cap L_5$.
\end{itemize}
If $L_1\perp L_4,L_2\perp L_5,L_3\perp L_6$ then, as given in Figure~\ref{fig:One}, the order of points 
\begin{itemize}
	\item on $L_2$ must be $L_1\cap L_2, L_3\cap L_2, L_6\cap L_2, L_4\cap L_2, L_5\cap L_2$
	\item on $L_4$ must be $L_1\cap L_4, L_6\cap L_4, L_2\cap L_4, L_5\cap L_4, L_3\cap L_4$
	\item on $L_6$ must be $L_3\cap L_6, L_2\cap L_6, L_4\cap L_6, L_1\cap L_6, L_5\cap L_6$.
\end{itemize}
The order of points on lines $L_2,L_4,L_6$ as given in Figure~\ref{fig:Two} is not possible.
\end{prop}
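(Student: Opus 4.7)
The plan is to continue with the same coordinate normalisation used in the preceding proposition and proof: place $L_1$ on the X-axis, $L_4$ on the Y-axis, write $L_{i+1}:y=m_ix+c_i$ for $i=1,2$, and $L_{j+4}:y=-\tfrac{1}{m_j}x+d_j$ for $j=1,2$, so that the three perpendicularities $L_1\perp L_4$, $L_2\perp L_5$, $L_3\perp L_6$ are automatically built in. With these choices, all fifteen intersection points $L_i\cap L_j$ have explicit coordinates in terms of the six parameters $m_1,m_2,c_1,c_2,d_1,d_2$, so the problem becomes a bookkeeping exercise in linear inequalities.

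First I would translate the three given orderings into inequalities. The ordering on $L_1$ amounts to a strict ordering of the X-intercepts $-c_1/m_1,\,-c_2/m_2,\,0,\,m_2d_2,\,m_1d_1$, which after inspection forces $0<m_1<m_2$ together with $c_1,c_2,d_1,d_2>0$, $c_1/m_1>c_2/m_2$, and $d_1m_1>d_2m_2$. Substituting into the equations of $L_3$ and $L_5$ and reading off the ordering on those two lines supplies the further inequalities $c_2>d_1>c_1>d_2>0$ (these are exactly the inequalities produced from the figure in the previous proposition, but now derived from the orderings rather than postulated).

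Next I would use these inequalities to compute the Y-coordinates of the five intersection points on $L_4$ (the Y-axis), which are simply $0,c_1,c_2,d_1,d_2$; from $0<d_2<c_1<d_1<c_2$ one reads off directly that the order on $L_4$ is $L_1\cap L_4,\,L_6\cap L_4,\,L_2\cap L_4,\,L_5\cap L_4,\,L_3\cap L_4$, matching Figure~\ref{fig:One}. The same strategy applied to $L_2$ and to $L_6$ — each time one computes the X- or Y-coordinate of the intersection with the other five lines as an explicit rational function of the parameters and compares — yields precisely the two other orderings asserted in the proposition. Since these orderings are uniquely determined by the inequalities, the alternative orderings drawn in Figure~\ref{fig:Two} would require, for example, reversing the relative position of $L_2\cap L_4$ and $L_6\cap L_4$, which is equivalent to $c_1<d_2$, contradicting the above. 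Thus Figure~\ref{fig:Two} is incompatible with the perpendicularity hypotheses.

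The main obstacle is not conceptual but computational: there are fifteen intersections and three lines on which the five-point ordering must be verified, and on each line one must compare five rational expressions in the six parameters. The saving grace is that all the comparisons reduce, after clearing the positive denominators $1+m_1m_2$, $m_i$, etc., to sign inequalities among the quantities $c_1-d_2$, $c_2-d_1$, $c_2-c_1$, $d_1-d_2$, $m_2(c_1-d_2)-m_1(c_2-d_1)$, and the like, all of whose signs are pinned down by the inequalities $0<m_1<m_2$ and $c_2>d_1>c_1>d_2>0$ extracted in the first step. So the proof is essentially a tabulation rather than a new idea, and the conclusion that Figure~\ref{fig:Two} is infeasible drops out as soon as any single ordering disagreement is exhibited.
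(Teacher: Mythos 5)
Your overall strategy is the same as the paper's (fix $L_1,L_4$ as the axes, write $L_2:y=m_1x+c_1$, $L_3:y=m_2x+c_2$, $L_5:y=-\tfrac{1}{m_1}x+d_1$, $L_6:y=-\tfrac{1}{m_2}x+d_2$, turn the hypothesised orderings into inequalities, and detect Figure~\ref{fig:Two} by the relative position of $c_1$ and $d_2$), but there is a genuine gap at exactly the crucial step. You claim that ``reading off the ordering'' on $L_3$ and $L_5$ supplies $c_2>d_1>c_1>d_2>0$. What those orderings actually give by direct read-off is only $c_2>d_1>\max(c_1,d_2)>0$ (plus intercept inequalities such as $c_1m_2>c_2m_1$ and $d_2m_2<d_1m_1$): the comparison between $c_1$ and $d_2$ is the order of $L_2\cap L_4$ versus $L_6\cap L_4$ on the Y-axis, which is part of the \emph{conclusion}, not the hypothesis, and it is precisely the inequality that separates Figure~\ref{fig:One} from Figure~\ref{fig:Two}. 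It cannot follow from the orderings alone in any parametrisation-free way, since Figure~\ref{fig:Two} is realisable by arrangements with the same orderings on $L_1,L_3,L_5$ but without the perpendicularity (this is exactly how the proposition is used in the first proof of Theorem~\ref{theorem:SixLines}). The paper obtains $c_1>d_2$ by the non-trivial combination $\tfrac{c_1}{m_1}>\tfrac{c_2}{m_2}$ and $\tfrac{d_2}{m_1}<\tfrac{d_1}{m_2}$ (both using the perpendicular slopes to identify the X-intercepts as $m_1d_1,\,m_2d_2$), added to $c_2>d_1$, giving $\tfrac{c_1-d_2}{m_1}>\tfrac{c_2-d_1}{m_2}>0$; your proposal never performs this step.

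The gap propagates into your final paragraph: you assert that the sign of $m_2(c_1-d_2)-m_1(c_2-d_1)$ is ``pinned down'' by $0<m_1<m_2$ and $c_2>d_1>c_1>d_2>0$, which is false (take $m_1=1$, $m_2=2$, $c_1-d_2=\tfrac{1}{10}$, $c_2-d_1=10$); its sign is determined only after invoking the intercept inequalities as above. So as written the argument is circular: the one inequality doing all the work, $c_1>d_2$, is assumed to be a trivially available hypothesis-level fact, when it is the content of the proposition and the only place the hypothesis $L_2\perp L_5$, $L_3\perp L_6$ enters beyond bookkeeping. To repair the proof, insert the displayed chain of inequalities above (or an equivalent derivation) before concluding the order on $L_4$ and the quadrant of $L_2\cap L_6$; the remaining tabulation of orders on $L_2$ and $L_6$ is then fine and matches the paper's proof.
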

\begin{proof}
It is easy to see that given the order of points on $L_1,L_3,L_5$, the points of intersections $L_2\cap L_4,L_4\cap L_6, L_6 \cap L_2$ lie in the interior of the triangle $\Gd L_1L_3L_5$. This can be proved as follows. Consider the convex quadrilateral $\square PQRS$ whose vertices are $P=L_4\cap L_5, Q=L_2\cap L_5,R=L_1\cap L_4,S=L_3\cap L_2$. This quadrilateral lies inside the triangle $\Gd L_1L_3L_5$ and the lines $L_2,L_4$ form the diagonals of the quadrilateral. Hence $L_2\cap L_4$ lies in the interior of the triangle $\Gd L_1L_3L_5$. The cases of the points $L_4\cap L_6,L_2\cap L_6$ are similar.

Now there are two possibilities for orders of points on lines $L_2,L_4,L_6$ as shown in Figure~\ref{fig:One} and Figure~\ref{fig:Two}. We need to show that if $L_1\perp L_4,L_2\perp L_5,L_3\perp L_6$ then the orders of points on lines $L_2,L_4,L_6$ given in Figure~\ref{fig:One} occur and the orders of points on lines $L_2,L_4,L_6$ given in Figure~\ref{fig:Two} does not occur.

We prove this as follows. Assume that $L_1\perp L_4,L_2\perp L_5,L_3\perp L_6$ and $L_1$ is the X-axis and $L_4$ is the Y-axis. Assume without loss of generality by using a reflection about the X-axis, that the triangle $\Gd L_1L_3L_5$ is in the closed upper half-plane. 
Now assume without loss of generality by using a reflection about the Y-axis, that the extreme point $L_1\cap L_5$ on $L_1$ lies on the positive X-axis and the extreme point $L_1\cap L_2$ on $L_1$ lies on the negative X-axis. Note that under reflections, the orders of intersection points on the lines do not change.

Now the point $L_2\cap L_5$ lies in the Quadrant I, the point $L_5\cap L_6$ lies in Quadrant IV, the points $L_3\cap L_5,L_3\cap L_6,L_3\cap L_2$ lie in Quadrant II, the points $L_2\cap L_1,L_3\cap L_1$ lie on the negative X-axis, the points $L_1\cap L_6,L_1\cap L_5$ lie on the positive X-axis. The points $L_i\cap L_4$ for $i=2,3,5,6$ lie on the positive Y-axis.

Now we observe that the order of intersections on a large circle $C$ enclosing all the intersection points $L_i\cap L_j, 1\leq i<j\leq 6$ in an anticlockwise manner is the $L_1\cap C, L_2\cap C, L_3\cap C, L_4\cap C, L_5\cap C, L_6 \cap C$.  
Also if we let the equation of $L_i$ be given by $y=m_{i-1}x+c_{i-1}$ for $i=2,3$ then we have $0<m_1<m_2,0<c_1<c_2$ and the equation of the line $L_j$ is given by $y=-\frac 1{m_{j-4}}x+d_{j-4}$ for $j=5,6$ for some $0<d_2<d_1$.

Using the order of intersection points on the X-axis, we get $-\frac {c_1}{m_1}<-\frac {c_2}{m_2}<0<d_2m_2<d_1m_1$. Using the orders of intersection points on the lines $L_3$ and $L_5$, we also have $c_2>d_1>\max(c_1,d_2)>0$. Now we observe
\equa{&\frac {c_1}{m_1}-\frac {c_2}{m_2}>0>\frac{d_2m_2-d_1m_1}{m_1m_2}=\frac{d_2}{m_1}-\frac{d_1}{m_2} \Ra \frac{c_1-d_2}{m_1}>\frac{c_2-d_1}{m_2}\Ra\\ &m_2(c_1-d_2)>m_1(c_2-d_1) \Ra \frac{m_2(c_1-d_2)}{1+m_1m_2}>\frac{m_1(c_2-d_1)}{1+m_1m_2}>0.}
So we obtain that $c_1>d_2$ and the point $L_2\cap L_6$ lies in the Quadrant II since its X-coordinate is $-\frac{m_2(c_1-d_2)}{1+m_1m_2}$ which is negative. 

Now we conclude that the orders of intersection points on the lines $L_2,L_4,L_6$ is as given in Figure~\ref{fig:One} and not as given in Figure~\ref{fig:Two}. This proves the proposition. 
\end{proof}
Now we prove the first main theorem after a remark.
\begin{remark}
\label{remark:TopFact}
The main idea behind the first proof of Theorem~\ref{theorem:SixLines} is that if a line arrangement $\mcl{L}_n^2$ arises from a discriminatal arrangement $\mcl{C}^n_{\binom{n}{3}}$ which is ``sufficiently general" which in turn is an open condition, a line arrangement $\ti{\mcl{L}}_n^2$ obtained by small perturbations of the slopes of the lines in $\mcl{L}_n^2$ must also arise from a ``sufficiently general" discriminantal arrangement $\ti{\mcl{C}}^n_{\binom{n}{3}}$. Moreover, all the finitely many configurations of line arrangements arising from $\mcl{C}^n_{\binom{n}{3}}$ are also given by the line arrangements arising from $\ti{\mcl{C}}^n_{\binom{n}{3}}$ and vice versa.  
\end{remark}

\begin{proof}[Proof of Theorem~\ref{theorem:SixLines}]
Consider the open set $\mcl{U}(6,2)$ in $(\mbb{R}^3)^6$ given by \equa{\mcl{U}(6,2)=\{((a_i,b_i,c_i))_{i=1}^6\in (\mbb{R}^3)^6\mid &\Det\matthree{a_i}{b_i}{c_i}{a_j}{b_j}{c_j}{a_k}{b_k}{c_k}\neq 0,1\leq i<j<k\leq 6,\\&\Det\mattwo {a_i}{b_i}{a_j}{b_j}\neq 0, 1\leq i<j\leq 6\}.}
If $((a_i,b_i,c_i))_{i=1}^6\in \mcl{U}(6,2)$ then the line arrangement $\mcl{L}^2_6=\{L_i:a_ix+b_iy=c_i\mid 1\leq i\leq 6\}$ is a generic line arrangement. Let $\mcl{V}(6,2)\subs \mcl{U}(6,2)$ denote the open and zariski dense subset of those points $((a_i,b_i,c_i))_{i=1}^6\in \mcl{U}(6,2)$ such that the generic set $\mcl{E}=\{(a_i,b_i)\mid 1\leq i\leq 6\}$ give rise to sufficiently general discriminantal arrangements in the sense of  C.~A.~Athanasiadis (Definition $2.1$ in~\cite{MR1720104}). It is an open subset using Proposition 2.2 in~\cite{MR1720104}. Let $\mcl{G}(6,2)=\{((a_i,b_i))_{i=1}^6\in (\mbb{R}^2)^6\mid \Det\mattwo {a_i}{b_i}{a_j}{b_j}\neq 0, 1\leq i<j\leq 6\}$. It is an open subset in $(\mbb{R}^2)^6$.

Consider the projection $\gp:(\mbb{R}^3)^6\lra (\mbb{R}^2)^6$ which is an open map given by $((a_i,b_i,c_i))_{i=1}^6 \lra ((a_i,b_i))_{i=1}^6$. Then we have a surjection $\gp:\mcl{U}(6,2) \lra \mcl{G}(6,2)$. Now $V=\gp(\mcl{V}(6,2))$ is an open set in $\mcl{G}(6,2)$.

Let $\mcl{M}=\{v_i=(a_i,b_i)\mid 1\leq i\leq 6\}$ be a generic set which gives rise to three pairs of mutually perpendicular lines. Suppose  $((a_i,b_i))_{i=1}^6\in V$. Assume, using Corollary~\ref{cor:Acute} that, we have after renumbering the subscripts, an acute-angled triangle $\Gd L_1L_3L_5$ and $L_4\perp L_1,L_5 \perp L_2,L_6\perp L_3$  where the direction cosines of $L_i$ is either $\frac{1}{\sqrt{a_i^2+b_i^2}}(a_i,b_i)$ or $-\frac{1}{\sqrt{a_i^2+b_i^2}}(a_i,b_i),1\leq i\leq 6$. If $((a_i,b_i))_{i=1}^6$ gives rise to Figure~\ref{fig:One} which is in the set $\gp^{-1}(((a_i,b_i))_{i=1}^6)\subs \gp^{-1}(V)$, then after a small perturbation of $((a_i,b_i))_{i=1}^6$ in $V$ to $((a'_i,b'_i))_{i=1}^6$, that is, there exists an open set $W\subs V,((a_i,b_i))_{i=1}^6\in W,((a'_i,b'_i))_{i=1}^6\in W$ such that $((a'_i,b'_i))_{i=1}^6$ gives rise to Figure~\ref{fig:Two} in $\gp^{-1}(((a'_i,b'_i))_{i=1}^6)$ $\subs \gp^{-1}(W)$  where the orders of points on all the lines in Figure~\ref{fig:Two} are represented by those of some element in $\gp^{-1}(((a_i,b_i))_{i=1}^6)$. Now this is a contradiction to Proposition~\ref{prop:OrderofPoints}. Hence main Theorem~\ref{theorem:SixLines} follows.    
\end{proof}
\section{\bf{Second Proof of the First Main Theorem}}
Now as mentioned in Remark~\ref{remark:FirstTheorem}, we use the full strength of C.~A.~Athanasiadis~\cite{MR1720104} result in the second proof of Theorem~\ref{theorem:SixLines}.
\begin{proof}
If we have a ``sufficiently general" discriminantal arrangement $\mcl{C}^n_{\binom{n}{m+1}}$ then the hyperplane arrangements that arise from it have the following nice property. Let $\mcl{H}^m_n$ be any hyperplane arrangement that arises from $\mcl{C}^n_{\binom{n}{m+1}}$. Let $\mcl{S}=\{S_1,S_2,\ldots,S_r\}$ be the sets of concurrencies that exist in $\mcl{H}^m_n$. Then we have that $\mcl{S}\in P(n,m)$, that is,
\equ{\mid \us{i\in I}{\bigcup}S_i \mid>m+\us{i\in I}{\sum}(\mid S_i\mid-m)}
for all $I\subseteq \{1,2,\ldots,r\}$ with $\mid I\mid \geq 2$.

Now in the case of a line arrangement of six lines in the plane which form a triangle with three altitudes, there are four sets of concurrencies where three lines concur at each of these four points. If $S_i,i=1,2,3,4$ are the sets of concurrencies then we have $\mid \us{i=1}{\os{4}{\cup}} S_i\mid =6$ and $2+\us{i=1}{\os{4}{\sum}}(\mid S_i\mid -2)=2+(3-2)+(3-2)+(3-2)+(3-2)=6$. Hence such a configuration cannot arise from a ``sufficiently general" discriminantal arrangement. This proves Theorem~\ref{theorem:SixLines}.
\end{proof}
\begin{remark}
The same conclusion as in the second proof of Theorem~\ref{theorem:SixLines} can be obtained for six lines forming a quadrilateral and two diagonals. In fact there is a projective transformation which takes a line arrangement of six lines which form a triangle with the three altitudes to a line arrangement of six lines which form a quadrilateral and two diagonals. Also refer to Theorem $8$, page 163 in  H.~H.~Crapo~\cite{MR0843374}.   
\end{remark}
\section{\bf{Computational Verification}}
In this section we computationally verify and cohere with the fact that indeed a generic six-line arrangement, which consists of three pairs of mutually perpendicular lines, does not give rise to a very generic discriminantal arrangement with two cases of examples. For computing the number of convex cones of a discriminantal arrangement, we compute its characteristic polynomial. The method of computing the characteristic polynomial for hyperplane arrangements is a well established method. 
Articles by T.~Zaslavky~\cite{MR0400066},~\cite{MR0357135}, F.~Ardila~\cite{MR2318445}, E.~Katz~\cite{MR3702317}, and books by 
A.~Dimca ~\cite{MR3618796}, P.~Orlik \& H.~Terao~\cite{MR1217488}, R.~Stanley~\cite{MR2868112}, in which this concept is explained, are relevant.

Assume without loss of generality that, $L_1$ is the X-axis, $L_4$ is the Y-axis and $L_i$ has slope $m_i$ for $i=2,3,5,6$ with $m_5=-\frac{1}{m_2}>m_6=-\frac{1}{m_3}>m_1=0>m_2>m_3$. We consider two cases: $m_2=\frac 1{m_3}$ and $m_2\neq \frac 1{m_3}$. 
\subsection{The Case $m_2=\frac 1{m_3}$}
We mention an example here.
\begin{example}
	\label{Example:SixLines}
	~\\
	Consider the following generic six-line arrangement $\mcl{L}_6^2=\{L_1,L_2,L_3,L_4,L_5,L_6\}$ given by 
	\equa{&L_1: x_1=0, L_2: 2x_1+3x_2=-2,L_3:3x_1+2x_2=3,\\
		&L_4:x_2=0, L_5: 3x_1-2x_2=5,L_6: 2x_1-3x_2=5.}
	We have $L_1\perp L_4,L_2\perp L_5,L_3\perp L_6$. 
	The $\binom{6}{3}=20$ hyperplanes of its discriminantal arrangement in $\mbb{R}^6$ is given by 
	\equa{M_{\{1<2<3\}}&=\{(y_1,y_2,y_3,y_4,y_5,y_6)\in \mbb{R}^6\mid -5y_1-2y_2+3y_3=0\},\\ 
		M_{\{1<2<4\}}&=\{(y_1,y_2,y_3,y_4,y_5,y_6)\in \mbb{R}^6\mid 2y_1-y_2+3y_4=0\},\\
		M_{\{1<2<5\}}&=\{(y_1,y_2,y_3,y_4,y_5,y_6)\in \mbb{R}^6\mid -13y_1+2y_2+3y_5=0\},\\
		M_{\{1<2<6\}}&=\{(y_1,y_2,y_3,y_4,y_5,y_6)\in \mbb{R}^6\mid -12y_1+3y_2+3y_6=0\},\\
		M_{\{1<3<4\}}&=\{(y_1,y_2,y_3,y_4,y_5,y_6)\in \mbb{R}^6\mid 3y_1-y_3+2y_4=0\},\\
		M_{\{1<3<5\}}&=\{(y_1,y_2,y_3,y_4,y_5,y_6)\in \mbb{R}^6\mid -12y_1+2y_3+2y_5=0\},\\
		M_{\{1<3<6\}}&=\{(y_1,y_2,y_3,y_4,y_5,y_6)\in \mbb{R}^6\mid -13y_1+3y_3+2y_6=0\},\\
		M_{\{1<4<5\}}&=\{(y_1,y_2,y_3,y_4,y_5,y_6)\in \mbb{R}^6\mid -3y_1+2y_4+y_5=0\},\\
		M_{\{1<4<6\}}&=\{(y_1,y_2,y_3,y_4,y_5,y_6)\in \mbb{R}^6\mid -2y_1+3y_4+y_6=0\},\\
		M_{\{1<5<6\}}&=\{(y_1,y_2,y_3,y_4,y_5,y_6)\in \mbb{R}^6\mid -5y_1+3y_5-2y_6=0\},}
		\equa{M_{\{2<3<4\}}&=\{(y_1,y_2,y_3,y_4,y_5,y_6)\in \mbb{R}^6\mid 3y_2-2y_3-5y_4=0\},\\
		M_{\{2<3<5\}}&=\{(y_1,y_2,y_3,y_4,y_5,y_6)\in \mbb{R}^6\mid -12y_2+13y_3-5y_5=0\},\\
		M_{\{2<3<6\}}&=\{(y_1,y_2,y_3,y_4,y_5,y_6)\in \mbb{R}^6\mid -13y_2+12y_3-5y_6=0\},\\
		M_{\{2<4<5\}}&=\{(y_1,y_2,y_3,y_4,y_5,y_6)\in \mbb{R}^6\mid -3y_2+13y_4+2y_5=0\},\\
		M_{\{2<4<6\}}&=\{(y_1,y_2,y_3,y_4,y_5,y_6)\in \mbb{R}^6\mid -2y_2+12y_4+2y_6=0\},\\
		M_{\{2<5<6\}}&=\{(y_1,y_2,y_3,y_4,y_5,y_6)\in \mbb{R}^6\mid -5y_2+12y_5-13y_6=0\},}
		\equa{M_{\{3<4<5\}}&=\{(y_1,y_2,y_3,y_4,y_5,y_6)\in \mbb{R}^6\mid -3y_3+12y_4+3y_5=0\},\\
		M_{\{3<4<6\}}&=\{(y_1,y_2,y_3,y_4,y_5,y_6)\in \mbb{R}^6\mid -2y_3+13y_4+3y_6=0\},\\
		M_{\{3<5<6\}}&=\{(y_1,y_2,y_3,y_4,y_5,y_6)\in \mbb{R}^6\mid -5y_3+13y_5-12y_6=0\},\\
		M_{\{4<5<6\}}&=\{(y_1,y_2,y_3,y_4,y_5,y_6)\in \mbb{R}^6\mid -5y_4+2y_5-3y_6=0\}}
	and 
	$\mcl{C}^6_{\binom{6}{3}}=\{M_{\{i<j<k\}}\mid 1\leq i<j<k\leq 6\}$. 
	Now we use the following formula for the number of convex cones of a central arrangement.
	For a central arrangement $\mcl{A}$ in $n$-dimensional Euclidean space, the number of convex cones in the arrangement is given by 
	\equan{Regions}{(-1)^n\gch_{\mcl{A}}(-1)=(-1)^n\us{\mcl{B}\subseteq \mcl{A}}{\sum}(-1)^{\#(\mcl{B})+n-rank(\mcl{B})}.}
	Here $\gch_{\mcl{A}}(x)$ is the characteristic polynomial of $\mcl{A}$. Formula~\ref{Eq:Regions} can be derived from Proposition $3.11.3$ on Page $283$ of R.~Stanley~\cite{MR2868112}. We take here $\mcl{A}=\mcl{C}^6_{\binom{6}{3}}=B$ to simplify notation. Here $B$ denotes the $(20\times 6)$ matrix of coefficients of the variables $y_1,\ldots,y_6$ in the hyperplanes $M_{\{i<j<k\}}$, $1\leq i<j<k\leq 6$. Then we can compute the ranks of all submatrices corresponding to any finite subset $\mcl{B}$ of the rows of $B=\mcl{A}$, and use the above formula to compute $r(\mcl{C}^6_{\binom{6}{3}})=(-1)^6\gch{\mcl{C}^6_{\binom{6}{3}}}(-1)$. Upon computation we obtain that there are \equ{r(\mcl{C}^6_{\binom{6}{3}})=(-1)^6\gch_{\mcl{C}^6_{\binom{6}{3}}}(-1)=884 \text{ convex cones}.}
	
	In this example we have the slopes $m_i$ of the lines $L_i,1\leq i\leq 6$ are given by $m_1=0,m_2=-\frac 23,m_3=-\frac 32,m_4=\infty,m_5=\frac32,m_6=\frac23$ with $m_2=\frac 1{m_3}$. 
	
	Now the characteristic polynomial of a very generic discriminantal arrangement corresponding is given by 
	\equa{\gch^{\text{Very Generic}}(x)&=x^6-20x^5+145x^4-426x^3+300x^2\\
		&=x^2(x-1)(x^3-19x^2+126x-300).}
 For this we may refer to Y.~Numata, A.~Takemura~\cite{MR2986882} and H.~Koizumi, Y.~Numata, A.~Takemura~\cite{MR3000446}.
	Hence we have \equ{r^{\text{Very Generic}}=(-1)^6\gch^{\text{Very Generic}}(-1)=892\neq 884.}
	
\end{example}
\subsection{The Case $m_2\neq \frac 1{m_3}$}
We mention another example here.
\begin{example}
If we consider six lines $L_i,1\leq i\leq 6$ with slopes $m_i,1\leq i\leq 6$ respectively such that $m_1=0,m_2=-1,m_3=-2,m_4=\infty,m_5=1,m_6=\frac 12$ then we have $m_2\neq \frac 1{m_3}$ and a similar computation as in the previous case yields $888\neq 892$ convex cones of its associated discriminantal arrangement. 
\end{example}
\section{\bf{Proof of the Second Main Theorem}}
We begin with a proposition.
\begin{prop}
	\label{prop:SixLines}
	Consider the generic line arrangement given in Figure~\ref{fig:Three}.
	Here $L_1$ is the X-axis, $L_4$ is the Y-axis. The equations for $L_1,L_2,L_3,L_4,L_5,L_6$ are given as follows.
	\equa{&L_1: y=0, L_2: x-2y=7, L_3: -2x+y=4,\\
		&L_4: x=0, L_5: 5x+ y=2, L_6: x+y=-3.}
	\begin{figure}[h]
		\centering
		\includegraphics[width = 1.0\textwidth]{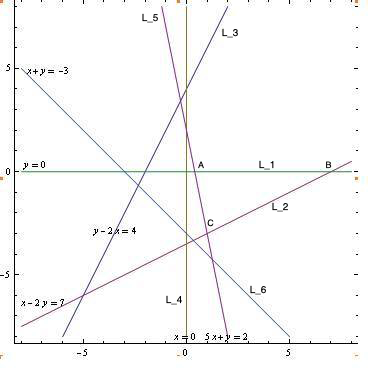}
		\caption{A Generic Six-Line Arrangement with Triangular Cell $\Gd ABC$}
		\label{fig:Three}
	\end{figure}
	Let $A=L_1\cap L_5, B=L_1\cap L_2, C=L_2\cap L_5$.
	By translating the lines $L_i, 1\leq i\leq 6$ in the plane and keeping the orders of the remaining points of intersections on each of the lines unchanged, the interchange of points $A,B$ on the line $L_1$, $B,C$ on the line $L_2$ and $A,C$ on the line $L_5$ cannot be done, that is, the orientation of the triangle $\Gd ABC$ cannot be flipped. Equivalently the point $C$ alone cannot be pushed to the first quadrant by the translations of the lines $L_2,L_3,L_5,L_6$ and keeping the orders of intersections of points the same on all the lines $L_i, 1\leq i\leq 6$ except for the interchange of points $A,B$ on the line $L_1$, $B,C$ on the line $L_2$ and $A,C$ on the line $L_5$. 
\end{prop}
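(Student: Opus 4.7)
The plan is to parametrize the allowed translations of the six lines by their intercepts, reduce the three required interchanges to a single sign-flip, and then extract a contradiction from the surviving order-preserving inequalities on the two axes $L_1$ and $L_4$. After a global translation of the plane (which preserves every intersection order) one may assume $L_1 \cap L_4 = (0,0)$, so $L_1$ is the $x$-axis and $L_4$ is the $y$-axis; the translates of $L_2, L_3, L_5, L_6$ are then encoded by their intercepts $c_2, c_3, c_5, c_6$, originally equal to $7, 4, 2, -3$.

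The first step will be to observe that the three prescribed interchanges are governed by a single sign. Indeed, $L_1 \cap L_2 = (c_2, 0)$ and $L_1 \cap L_5 = (c_5/5, 0)$, so their order on $L_1$ flips precisely when $c_5 - 5 c_2$ changes sign; a short direct computation of $L_2 \cap L_5 = \bigl((c_2 + 2 c_5)/11,\, (c_5 - 5 c_2)/11\bigr)$ shows that the orders of the pair $(L_2 \cap L_1,\, L_2 \cap L_5)$ on $L_2$ and of the pair $(L_5 \cap L_1,\, L_5 \cap L_2)$ on $L_5$ are each also controlled by the sign of $c_5 - 5 c_2$, since all three flips correspond to the single $3 \times 3$ determinant detecting concurrency of $L_1, L_2, L_5$. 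Hence achieving the prescribed simultaneous interchange is equivalent to passing from the original inequality $c_5 < 5 c_2$ to $c_5 > 5 c_2$.

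The main step is then to harvest preserved orderings on the two axes to contradict this. Preservation of the original order on $L_1$ forces, among other things, $c_6 < -c_3/2$ (i.e., $c_3 < -2 c_6$) and $0 < c_2$. Preservation of the original order on $L_4$ forces $-c_2/2 < c_6$ (i.e., $-2 c_6 < c_2$) and $c_5 < c_3$, since the $y$-coordinates of $L_4 \cap L_2,\, L_4 \cap L_6,\, L_4 \cap L_1,\, L_4 \cap L_5,\, L_4 \cap L_3$ are $-c_2/2,\, c_6,\, 0,\, c_5,\, c_3$ respectively. Chaining these inequalities with $c_2 > 0$ and the flipped condition $c_5 > 5 c_2$ yields
\[
c_3 \;<\; -2 c_6 \;<\; c_2 \;<\; 5 c_2 \;<\; c_5 \;<\; c_3,
\]
which is self-contradictory, completing the argument.

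I do not foresee a serious obstacle; the computations reduce to elementary $2 \times 2$ linear algebra. The only care needed is in selecting the minimal cycle of constraints: the orders on $L_2, L_3, L_5, L_6$ supply many additional inequalities, but those on the two axes $L_1$ and $L_4$ already suffice. Conceptually, the point is that the signed area of $\Gd L_1 L_2 L_5$ governs all three interchanges in tandem, and it cannot change sign while the remaining lines $L_3, L_4, L_6$ retain the relative positions dictated by Figure~\ref{fig:Three}.
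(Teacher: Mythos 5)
Your proposal is correct and follows essentially the same route as the paper's proof: fix $L_1,L_4$ as the coordinate axes, parametrize the translates of $L_2,L_3,L_5,L_6$ by their constant terms, and extract from the preserved orders of intersection points on the two axes exactly the chain of inequalities ($0<c_5<c_3<-2c_6<c_2$, in the paper's notation $0<c<b<-2d<a$) that contradicts the flip condition $5c_2<c_5$, which the paper states equivalently as the $y$-coordinate $(c_5-5c_2)/11$ of $C$ being negative. Your explicit remark that all three interchanges are governed by the single sign of the concurrency determinant of $L_1,L_2,L_5$ is a nice justification of the ``equivalently'' clause in the statement, but the substance of the argument coincides with the paper's.
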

\begin{proof}
	First of all we need not need to translate all the lines to prove the proposition. We fix the lines $L_1,L_4$ as $X$ and $Y$-axes respectively.
	Upon translations of the lines $L_2,L_3,L_5,L_6$, let the equations of the lines be given by 
	\equa{&L_1: y=0, L_2: x-2y=a, L_3: -2x+y=b,\\
		&L_4: x=0, L_5: 5x+ y=c, L_6: x+y=d.}
	Now we must have that $L_3\cap L_5$ is in quadrant II, $L_3\cap L_6, L_2\cap L_3$ is in quadrant III and $L_2\cap L_6,L_5\cap L_6$ is in quadrant IV. We need to prove that $C=L_2\cap L_5$ cannot be in quadrant I. Also we must have by observing the X-intercept and Y-intercept, $a>0,b>0,c>0,d<0$. By observing the order of points on the Y-axis we have $-\frac a2<d<0<c<b$. By observing the order of points on the X-axis we have $d<-\frac b2<0$. This implies $0<c<b<-2d<a$. 
	Now $C=L_2\cap L_5=(\frac{a+2c}{11},\frac{c-5a}{11})$ and here we have $\frac{a+2c}{11}>0$ and $\frac{c-5a}{11}<\frac{c-a}{11}<0$. Hence the point $C$ must lie in the quadrant IV and not in quadrant I. This proves the proposition.
\end{proof}
Now we prove the second main theorem.
\begin{proof}[Proof of Theorem~\ref{theorem:SixLinesII}]
	If the set $\mcl{U}=\{v_1=(0,1),v_2=(1,-2),v_3=(-2,1),v_4=(1,0),v_5=(5,1),v_6=(1,1)\}$ given by Figure~\ref{fig:Three} is a very generic set then we are done. This is because the hyperplane $M_{\{1,2,5\}}$ of the associated discriminantal arrangement $\mcl{C}^6_{\binom{6}{3}}$ is not a codimension-one boundary hyperplane of the cone $C$ containing the point $(b_1,b_2,b_3,b_4,b_5,b_6)=(0,7,4,0,2,-3)$ using Proposition~\ref{prop:SixLines}. If the set $\mcl{U}$ is not very generic then we use a density argument.
	The space of sufficiently general discriminantal arrangements $\mcl{C}^n_{\binom{n}{m+1}}$ is dense in the space of discriminantal arrangements given by generic finite subsets of $\mbb{R}^m$ of cardinality $n$. (Here $n=6,m=2,\binom{n}{m+1}=\binom{6}{3}=20$).  The space of very generic (ordered) sets in $(\mbb{R}^2)^n$ is dense in the space of generic (ordered) sets in $(\mbb{R}^2)^n$. There exists a small neighbourhood $V$ of the point $\mcl{U}=(v_1,\ldots,v_6)$ in $(\mbb{R}^2)^6$ which contains a very generic point $\mcl{V}=(w_1,\ldots,w_6)\in (\mbb{R}^2)^6, w_i=(a_{i1},a_{i2}),\ 1\leq i\leq 6$ which gives rise to a very generic six-line arrangement $\{L_i:a_{i1}x_1+a_{i2}x_2=c_i\mid 1\leq i\leq 6\}$ with the constant coefficient vector $(c_1,\ldots,c_6)$ in a small neighbourhood of $(b_1,\ldots,b_6)$. For this very generic discriminantal arrangement given by $\mcl{V}$, the hyperplane $M_{\{1,2,5\}}$ is not a codimension-one boundary hyperplane of the cone $C$ containing the point $(c_1,\ldots,c_6)$.  This follows since it is an open condition. This proves the second main theorem.
\end{proof}
We finally mention a remark before the appendix section.
\begin{remark}
I believe that most of the discriminantal arrangements $\mcl{C}^n_{\binom{n}{m+1}}$ which are ``sufficiently general" satisfy the property that there is a convex cone $C$ such that codimension-one boundary hyperplanes of $\mbb{R}^n$ of the convex cone $C$  containing $(c_1,c_2,\ldots,c_n)$ need not bijectively give rise to simplex cells of the hyperplane arrangement $\mcl{H}^m_n$ corresponding to $(c_1,c_2,\cdots,c_n)$. However it is still an interesting exercise to characterize ``sufficiently general" discriminantal arrangements where a correspondence exists. 
\end{remark}

\section{\bf{Appendix}}

In the appendix section we discuss the intersection lattice of a (Zariski open and dense) class of ``very generic" or ``sufficiently general" discriminantal arrangements and give a combinatorial description of the lattice. 

\subsection{\bf{The matroid of circuits of the configuration of $n$ ``generic" points in $\mbb{R}^k$}}
First we mention a remark concerning the "generic" condition.
\begin{remark}
	A ``generic" set $\mcl{O}\subs \mcl{X}$ in an affine or quasi-affine irreducible algebraic set $\mcl{X}\subseteq \mbb{R}^k$ is a nonempty Zariski open subset given by non-vanishing of certain finite collection of polynomials in $k$ variables. Hence such an open set $\mcl{O}$ is dense in $\mcl{X}$.
\end{remark}
\begin{defn}
	A matroid $M$ is a ordered pair $(E,\mcl{I})$ consisting of a finite set $E$ and collection $\mcl{I}$ of subsets of $E$ having the following properties:
	\begin{enumerate}
		\item $\es\in \mcl{I}$.
		\item If $I\in \mcl{I},I'\subseteq I$ then $I'\in \mcl{I}$.
		\item If $I_1$ and $I_2$ are in $\mcl{I}$ and $\mid I_1\mid <\mid I_2\mid$, then there exists an element $e\in I_2\bs I_1$ such that $I_1\cup\{e\}\in \mcl{I}$.
	\end{enumerate}
	The members of $\mcl{I}$ are called independent sets of $M$. A subset of $E$ that is not in $\mcl{I}$ is called dependent. A minimal dependent set is called a circuit of $M$. The set of all circuits of $M$ is denoted by $\mcl{C}(M)$. A circuit of $M$ having $n$ elements is called an $n$-circuit. A maximal independent set is called a basis of $M$. It follows that any two bases of $M$ has the same cardinality. The set of all bases of $M$ is denoted by $\mcl{B}(M)$.
\end{defn}
\begin{example}
	Let $V=\{v_i=(a_{i1},a_{i2},\cdots,a_{ik})\mid 1\leq i\leq n\}\subs \mbb{R}^k$ be a set of $n$-vectors. Let $\mcl{I}$ be the collection of the linearly independent subsets of $E$. Then $M[V]=M[A]=(V,\mcl{I})$ is a matroid where $A=[a_{ij}]_{1\leq i\leq n,1\leq j\leq k}$. It is called the vector matroid.
\end{example}
\begin{example}
	Let $E=\{1,2,\cdots,n\}$ for a positive integer $n$. Let $\mcl{I}=\{X\subseteq E \text{ such that } \mid X\mid \leq k\}$. Then $G=(E,\mcl{I})$ is a matroid. The matroid $G$ is called uniform matroid. It is also denoted by $U_{k,n}$. The collection of circuits of $U_{k,n}$ is given by $\mcl{C}(U_{k,n})=\{X\subseteq E \text{ such that } \mid X\mid= k+1\}$. The collection of bases of $U_{k,n}$ is given by $\mcl{B}(U_{k,n})=\{X\subseteq E \text{ such that } \mid X\mid= k\}$.
\end{example}
\begin{remark}
	For a set $V=\{v_1,v_2,\cdots,v_n\}$ of $n$-vectors in $\mbb{R}^k$ if, in addition, any $k\leq n$ elements in $V$ are linearly independent then $M[V]$ is a vector matroid realizing the uniform matroid $U_{k,n}$. We say that the matroid $U_{k,n}$ is representable. Sometimes when there is no ambiguity, the matroid $M[V]$ is also denoted by $U_{k,n}$. In the collection of all possible ordered $n$-tuples of vectors in $\mbb{R}^k$, that is, in the space $(\mbb{R}^k)^n$, the subset of all ordered $n$-tuples of vectors such that, any $k\leq n$ of each $n$-tuple is linearly independent, is a ``generic" set as it is given by non-vanishing of certain determinant polynomials.  
\end{remark}
Now we form another matroid on the collection $\mcl{C}(U_{k,n})$ of circuits of the matroid $U_{k,n}$.
\begin{defn}[The matroid $D(U_{k,n}), D$ stands for Dilworth]
	The Dilworth matroid is given by $D(U_{k,n})=(\mcl{C}(U_{k,n}),\mcl{D})$ where $\mcl{D}$ is defined as follows. We say a collection $\mcl{S}=\{C_1,C_2,\cdots,C_m\}$ of circuits of $U_{k,n}$ is independent in $D(U_{k,n})$ , that is, $\mcl{S}\in \mcl{D}$ if for any non-empty subset $J\subseteq \{1,2,\cdots,m\}$ we have $\mid \us{j\in J}{\cup}C_j\mid \geq k+\mid J\mid$. The collection $\mcl{B}(D(U_{k,n}))\subs \mcl{D}$ of bases of $D(U_{k,n})$ is precisely those elements of $\mcl{D}$ which have cardinality $n-k$. 
\end{defn}
\subsubsection{\bf{Construction of a Representation of the Dilworth Matroid $D(U_{k,n})$}}
Now we construct, from a representation $M[V]$ of the uniform matroid $U_{k,n}$, a representation of the Dilworth matroid $D(U_{k,n})$ when $V$ is restricted in a ``further generic" subset of $(\mbb{R}^k)^n$. This will occupy a few pages below.

Let $V=\{v_i=(a_{i1},a_{i2},\cdots,a_{ik})\mid 1\leq i\leq n\}\subs \mbb{R}^k$ be denoted by the $(n\times k)$-matrix  $A=[a_{ij}]_{1\leq i\leq n,1\leq j\leq k}$ with all the $k\times k$ minors non-zero. Let $S=\{i_1<i_2<\cdots<i_k<i_{k+1}\}\subs \{1,2,\cdots,n\}$ be a $(k+1)$-subset. Let $X=[x_{ij}]_{1\leq i\leq n,1\leq j\leq k}$ be a $n\times k$ matrix of indeterminates and $P_S(X)=\Det([x_{i_tj}]_{1\leq t,j\leq k})$. The ``generic" subset to which $V$ belongs right now is given by 
\equa{\mcl{O}=\{A&=[a_{ij}]_{1\leq i\leq n,1\leq j\leq k}\mid P_S(A)=\Det([a_{i_tj}]_{1\leq t,j\leq k})\neq 0\\& \text{for every }1\leq i_1<i_2<\cdots<i_k\leq n\}\subs \mbb{R}^{kn}.}
Here $\mcl{O}$ is non-empty Zariski open and hence a dense subset of $\mbb{R}^{kn}$ as $\mbb{R}^{kn}$ is irreducible in Zariski topology.

Let us order the collection of ascendingly sorted $(k+1)$-subsets of $\{1,2,\cdots,n\}$ in dictionary order. For example if $n=4,k=2$ then 
\equ{\{1,2,3\}<\{1,2,4\}<\{1,3,4\}<\{2,3,4\}.}
Let $y_1,y_2,\cdots,y_n$ be $n$ distinct indeterminates. For every $(k+1)$-subset $S=\{i_1<i_2<\cdots<i_k<i_{k+1}\}\subs \{1,2,\cdots,n\}$ consider the following linear polynomial in the $n$ variables $y_1,y_2,\cdots,y_n$. 
\equ{L_{S,X=[x_{ij}]_{1\leq i\leq n,1\leq j\leq k}}(y_1,y_2,\cdots,y_n)=\Det\begin{pmatrix}
		x_{i_11} & x_{i_12} & x_{i_13} & \cdots & x_{i_1k} & y_{i_1}\\
		x_{i_21} & x_{i_22} & x_{i_23} & \cdots & x_{i_2k} & y_{i_2}\\
		x_{i_31} & x_{i_32} & x_{i_33} & \cdots & x_{i_3k} & y_{i_3}\\
		\vdots	 & \vdots   & \vdots   & \ddots & \vdots   & \vdots\\
		x_{i_k1} & x_{i_k2} & x_{i_k3} & \cdots & x_{i_kk} & y_{i_k}\\
		x_{i_{k+1}1} & x_{i_{k+1}2} & x_{i_{k+1}3} & \cdots & x_{i_{k+1}k} & y_{i_{k+1}}\\	
\end{pmatrix}}
Now form the $\binom{n}{k+1} \times n$ matrix $\Disc(A)$ using the ordering mentioned on the collection of $(k+1)$-subsets of $\{1,2,\cdots,n\}$ where the row corresponding to $(k+1)$-subset $S$ is the $n$-dimensional real coefficient vector of the linear polynomial \equ{L_{S,A=[a_{ij}]_{1\leq i\leq n,1\leq j\leq k}}(y_1,y_2,\cdots,y_n).} Note that each row of $\Disc(A)$ has $(k+1)$ nonzero entries and the remaining $n-k-1$ entries are zero. 

We will show that there exists a zariski open and hence dense subset $\mcl{U}\subs\mcl{O} \subs \mbb{R}^{kn}$ such that for $A\in \mcl{U}$ the vector matroid $M[A]$ represents the uniform matroid $U_{k,n}$ and the vector matroid $M[\Disc(A)]$ represents the Dilworth matroid $D(U_{k,n})$. This result is due to H.~Crapo~\cite{MR0843374}, Section 6, Page 149, Theorem 2.  

Let $X=[x_{ij}]_{1\leq i\leq n,1\leq j\leq k}$ be the matrix of indeterminates. Let $\Disc(X)$ be the $\binom{n}{k+1} \times n$ matrix where the row corresponding to the $(k+1)$-subset $S=\{i_1<i_2<\cdots<i_k<i_{k+1}\}\subs\{1,2,\cdots,n\}$ is the $n$-dimensional coefficient vector of the linear polynomial $L_{S,X=[x_{ij}]_{1\leq i\leq n,1\leq j\leq k}}(y_1,y_2,\cdots,y_n)$. Then we immediately observe that $\Disc(X).X=0$ the zero matrix of size ${\binom{n}{k+1}\times k}$. This follows because the determinant of a square matrix with a repeated row or a repeated column is zero. So if $A\in \mcl{O}$ then the rows of $\Disc(A)$ are orthogonal to the columns of $A$. Since $A$ has rank $k$ the rank of $\Disc(A)$ is at most $n-k$. Let $n-k<l\leq n$. So the determinant of any $l\times l$ minor in $\Disc(A)$ is zero. Since this is true for every $A\in \mcl{O}$ and $\mcl{O}$ is Zariksi dense in $\mbb{R}^{kn}$, the determinant polynomial in the variables $X=[x_{ij}]_{1\leq i\leq n,1\leq j\leq k}$ of any $l\times l$ minor of $\Disc(X)$ is an identically zero polynomial. What about $l\times l$ minors of the matrix $\Disc(X)$ for $1\leq l\leq n-k$? The following theorem answers this question to some extent. 

\begin{theorem}
	\label{theorem:Minors}
	Let $S_1,S_2,\cdots,S_l$ be $(k+1)$-subsets of $\{1,2,\cdots,n\}$ and suppose $\us{i=1}{\os{l}{\cup}} S_i=\{j_1,j_2,\ldots,j_m\}$.
	\begin{enumerate}
		\item 
		For $1\leq l < m-k$, the determinant polynomial of some $l\times l$ minor in the corresponding $l\times m$ submatrix of the $\binom{n}{k+1}\times n$ matrix $\Disc(X)$ in the variable entries $X=[x_{ij}]_{1\leq i\leq n,1\leq j\leq k}$ is not identically zero if the collection $\{S_1,S_2,\cdots,S_l\}$ is independent in the Dilworth matroid $D(U_{k,n})$. 
		\item For $l=m-k$, the determinant polynomial of any $l\times l$ minor in the corresponding $l\times m$ submatrix of the $\binom{n}{k+1}\times n$ matrix $\Disc(X)$ in the variable entries $X=[x_{ij}]_{1\leq i\leq n,1\leq j\leq k}$ is not identically zero if the collection $\{S_1,S_2,\cdots,S_l\}$ is independent in the Dilworth matroid $D(U_{k,n})$.
		\item Also the determinant polynomial of any $l\times l$ minor in the corresponding $l\times m$ submatrix of the $\binom{n}{k+1}\times n$ matrix $\Disc(X)$ in the variable entries $X=[x_{ij}]_{1\leq i\leq n,1\leq j\leq k}$ is identically zero if the collection $\{S_1,S_2,\cdots,S_l\}$ is not independent in the Dilworth matroid $D(U_{k,n})$.
	\end{enumerate}
\end{theorem}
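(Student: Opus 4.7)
The plan is to treat the three parts separately, relying on the orthogonality identity $\Disc(X)\cdot X = 0$ noted in the excerpt, the genericity of $\mcl{O}\sbq\mbb{R}^{kn}$, Hall's marriage theorem, and a classical Pl\"ucker-type duality between a matrix and a generator of its left null space.

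I would begin with part $(3)$, which is the easier direction. Suppose $\{S_1,\ldots,S_l\}$ is not independent in $D(U_{k,n})$, so there exists $J\sbq\{1,\ldots,l\}$ with $m_J := |\bigcup_{j\in J}S_j| < k+|J|$. By construction, the rows of $\Disc(X)$ indexed by $J$ are supported in the columns $\bigcup_{j\in J}S_j$. For any specialization $X=A$ with $A\in\mcl{O}$, the identity $\Disc(A)\cdot A=0$ forces these $|J|$ rows, restricted to their $m_J$ relevant columns, to lie in the left null space of the $m_J\times k$ matrix $A|_{\bigcup_{j\in J}S_j}$, a space of dimension $m_J-k<|J|$. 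Hence the $|J|$ rows, and therefore all $l$ rows of the enclosing $l\times m$ submatrix, are linearly dependent for every $A\in\mcl{O}$. Every $l\times l$ minor of the $l\times m$ submatrix is then a polynomial in $X$ vanishing on the Zariski-dense set $\mcl{O}$, and hence identically zero.

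For parts $(1)$ and $(2)$ I would first establish the intermediate claim: if $\{S_1,\ldots,S_l\}$ is independent in $D(U_{k,n})$ with $l\leq m-k$, then the rows of $\Disc(A)$ indexed by $S_1,\ldots,S_l$ are linearly independent for generic $A$, equivalently the corresponding $l\times m$ submatrix $B$ has rank $l$. The base case $l=1$ is immediate since the row indexed by $S_1$ has non-zero entries equal to $\pm$ the $k\times k$ minors of $A|_{S_1}$, all non-zero by the definition of $\mcl{O}$. For the inductive step I would argue by cases: if $S_l\nsbq\bigcup_{i=1}^{l-1}S_i$, row $S_l$ has a non-zero entry at a column where the previous rows vanish; if $S_l\sbq\bigcup_{i=1}^{l-1}S_i$, the Dilworth-independence inequality applied to $J=\{1,\ldots,l\}$ forces the kernel containing all relevant rows to have dimension $m-k\geq l$, strictly larger than the $(l-1)$-dimensional span of the earlier rows, and the matroid augmentation axiom for $D(U_{k,n})$ combined with a defect form of Hall's theorem exhibits a column at which a non-trivial linear combination involving row $S_l$ cannot vanish as a polynomial in $X$. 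Part $(1)$ then follows directly: rank $l$ of $B$ forces some $l\times l$ minor to be non-zero for generic $A$, so its determinant polynomial is not identically zero.

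For part $(2)$, with $l=m-k$, the intermediate claim gives $\mathrm{rank}(B)=m-k$, so the rows of $B$ span the entire $(m-k)$-dimensional left null space of $A_0 := A|_{\{j_1,\ldots,j_m\}}$. I would then invoke classical Pl\"ucker duality: if $A_0$ is an $m\times k$ matrix with all $k\times k$ minors non-zero and $B_0$ is any $(m-k)\times m$ matrix of full rank spanning the left null space of $A_0$, then for each $T\sbq\{j_1,\ldots,j_m\}$ with $|T|=m-k$ the maximal minor $\det(B_0|_T)$ equals the complementary minor $\det(A_0|_{T^c})$ up to a fixed non-zero scalar depending only on the choice of basis. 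Writing $B=UB_0$ for an invertible $l\times l$ matrix $U$ shows that every maximal minor of $B$ is a non-zero scalar multiple of a non-zero $k\times k$ minor of $A_0$, so is non-zero for generic $A$ and has non-identically-zero determinant polynomial in $X$.

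The principal obstacle is the inductive step for the intermediate claim, that is, converting combinatorial independence in $D(U_{k,n})$ into linear independence of rows of $\Disc(A)$ over the generic coordinates. A direct attempt to exhibit a non-vanishing $l\times l$ minor by choosing an SDR via Hall's theorem and isolating the ``diagonal'' permutation term fails in general because algebraic cancellation among the other permutation terms in the Leibniz expansion cannot be excluded a priori. The cleaner route, which I would follow (and which is carried out in Crapo's original treatment), is the combination of the matroid augmentation axiom for the genuine matroid $D(U_{k,n})$ with the kernel-dimension count already appearing in the proof of part $(3)$; once this intermediate claim is in place, parts $(1)$ and $(2)$ follow routinely from the orthogonality setup and Pl\"ucker duality.
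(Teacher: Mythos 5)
Your treatment of part (3) is correct and matches the paper's rank-count reasoning, and your Pl\"ucker/complementary-minor step for part (2) is sound \emph{given} your intermediate claim. The genuine gap is the intermediate claim itself, in the inductive case $S_l\sbq S_1\cup\cdots\cup S_{l-1}$. There you invoke ``the matroid augmentation axiom for $D(U_{k,n})$ combined with a defect form of Hall's theorem'' to exhibit a column at which a combination involving row $S_l$ cannot vanish, but no such argument is actually given, and none is available at this level of generality: the kernel-dimension count only shows that the ambient left null space of $A|_{\bigcup_i S_i}$ has dimension at least $l$, which is necessary but not sufficient for the $l$ particular rows to be linearly independent, and the augmentation axiom of $D(U_{k,n})$ is purely combinatorial --- using it to produce linear independence of rows of $\Disc(A)$ presupposes exactly what the theorem asserts, namely that for generic $A$ the linear matroid of these rows realizes $D(U_{k,n})$. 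You also correctly observe that the naive Hall/SDR argument applied directly to $\Disc(X)$ is blocked by possible cancellation (its entries are $k\times k$ minors of $X$, constrained by Pl\"ucker relations), so as written parts (1) and (2) rest on an unproved step.

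The paper closes precisely this gap by an indirect construction rather than by induction. After reducing to a basis $\{S_1,\ldots,S_{n-k}\}$ of $D(U_{k,n})$, it replaces the nonzero entries of the relevant rows of $\Disc(X)$ by fresh independent indeterminates $X_{a,S}$, obtaining a matrix $Y$; there Hall's marriage theorem, applied to the sets $S_i\cap T$ for each column set $T$ of size $n-k$, produces a system of distinct representatives, i.e.\ a diagonal of distinct variables, and since these entries are algebraically independent the corresponding monomial cannot cancel, so every maximal minor of the chosen $(n-k)\times n$ submatrix is a nonzero polynomial. One then specializes to a real matrix $C$ with all maximal minors nonzero, lets $D$ be an $n\times k$ matrix whose columns span the orthogonal complement of the row space of $C$, and uses the complementary-minor lemma for orthogonal decompositions (the same duality you invoke for part (2)) to conclude that all $k\times k$ minors of $D$ are nonzero; finally each row of $\Disc(D)$ indexed by some $S_i$ is proportional to the corresponding row of $C$, because both encode the essentially unique dependence among the same $k+1$ rows of $D$. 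This exhibits an explicit $A=D\in\mcl{O}$ at which the relevant minors of $\Disc(A)$ are nonzero, which is exactly what parts (1) and (2) require. To salvage your inductive intermediate claim you would need an argument of this kind (this is in fact Crapo's route, not the one you sketch) in the case $S_l\sbq\bigcup_{i<l}S_i$; as it stands, that step is a gap, not a routine deduction.
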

\begin{proof}
	Because of the nature of the entries of the matrix $\Disc(X)$ and the fact that any $l\times l$ minor is zero for $l>n-k$, it is enough to prove the theorem for $l=n-k$. So we assume that $l=n-k$. Even in this case $l=n-k$, if $\{S_1,S_2,\cdots,S_{n-k}\}$ is not independent in $D(U_{k,n})$ then it is clear that the determinant polynomial of any $(n-k)\times (n-k)$ minor in the corresponding $(n-k)\times m$ submatrix of the $\binom{n}{k+1}\times n$ matrix $\Disc(X)$, in the variable entries $X=[x_{ij}]_{1\leq i\leq n,1\leq j\leq k}$ is identically zero. An example is given in~\ref{Example:Minors} illustrating this case. Also an example is given in~\ref{Example:MinorCase1} illustrating assertion Theorem~\ref{theorem:Minors}(1).
	
	So we assume that $\{S_1,S_2,\cdots,S_{n-k}\}$ is a basis of $D(U_{k,n})$ and so $m=n$.
	Here we produce a real matrix $A=[a_{ij}]_{1\leq i\leq n,1\leq j\leq k}$ such that every $k\times k$ minor is nonzero such that for the associated matrix $\Disc(A)$, in the $(n-k)\times n$ submatrix corresponding to the rows $S_1,S_2,\cdots,S_{n-k}$, every $(n-k)\times (n-k)$ minor is nonzero. 
	
	Consider $(k+1)\binom{n}{k+1}$ variables $X_{a,S}$ for $a\in S$ and any $(k+1)$-subset $S$ of $\{1,2,\cdots,n\}$. Replace the nonzero polynomial entries of the matrix $\Disc(X)$ by these variables in their exact positions. Let us denote the new matrix by $Y$.
	\begin{claim}
		Any $(n-k)\times(n-k)$ minor of the $(n-k)\times n$ submatrix of $Y$ corresponding to the rows $S_1,S_2,\cdots,S_{n-k}$ is nonzero where $\{S_1,S_2,\cdots,S_{n-k}\}$ is a basis of $D(U_{k,n})$.	
	\end{claim}
	\begin{proof}[Proof of the Claim]
		This is proved by an application of Hall's marriage theorem. Let $T=\{i_1<i_2<\cdots<i_{n-k}\}\subs\{1,2,\cdots,n\}$ and denote $T^c=\{1,2,\cdots,n\}\bs\{i_1$ $<i_2<\cdots<i_{n-k}\}$. Then we have for any subset $J\subs \{1,2,\cdots,(n-k)\},\mid\us{j\in J}{\cup}(S_j\cap T)\mid= \mid\big(\us{j\in J}{\cup}S_j\big)\cap T\mid= \mid\big(\us{j\in J}{\cup}S_j\big)\bs T^c\mid\geq \mid\us{j\in J}{\cup}S_j\mid-\mid T^c\mid\geq k+\mid J\mid-k=\mid J\mid$. Hence Hall's marriage condition is satisfied for the sets $S_i\cap T,i=1,2,\cdots,n-k$. So there exists $a_i\in S_i\cap T,1\leq i\leq n-k,a_i\neq a_j, 1\leq i\neq j\leq n-k$. Hence there is a diagonal of variables $X_{a_i,S_i}$ in the $(n-k)\times (n-k)$ in the submatrix corresponding to $S_1,S_2,\cdots,S_{n-k}$ and $T=\{i_1<i_2<\cdots<i_{n-k}\}$ which give rise to a monomial term in determinant expansion of its minor of $Y$. So the $(n-k)\times (n-k)$ minor of $Y$ is nonzero. This holds for any set $T$ of cardinality $n-k$. Hence the claim follows. 
	\end{proof} 
	Continuing with the proof of Theorem~\ref{theorem:Minors}, we can choose a set of real numbers for the entries $X_{a,S}$ in $Y$ such that any $(n-k)\times(n-k)$ minor of the $(n-k)\times n$ submatrix of $Y$ corresponding to the rows $S_1,S_2,\cdots,S_{n-k}$ is nonzero where $\{S_1,S_2,\cdots,S_{n-k}\}$ is a basis of $D(U_{k,n})$. Let us denote this $(n-k)\times n$ submatrix by $C$. Let $D$ be a $n\times k$ real matrix such that the columns of $D$ span the $k$-dimensional space in $\mbb{R}^n$ which is orthogonal to the $(n-k)$-dimensional row subspace of $C$ in $\mbb{R}^n$. Then every $k\times k$ minor of $D$ is nonzero. This follows from the claim below.
	\begin{claim}
		Let $Q$ be a non-singular matrix $n\times n$ matrix such that first $k$ rows $q_1,q_2,$ $\cdots,q_k$ span a space $W_1\subs \mbb{R}^n$ and the remaining $(n-k)$ rows $q_{k+1},q_{k+2},\cdots q_{n}$ span a space $W_2\subs \mbb{R}^n$. Assume also that $W_1 \perp W_2$, that is, $W_1$ is orthogonal to $W_2$. Then a $k\times k$ minor in the first $k$ rows is nonzero if and only if its complementary $(n-k)\times (n-k)$ minor in the remaining $n-k$ rows is nonzero. 
	\end{claim} 
	\begin{proof}[Proof of the Claim]
		We have $\dim(W_1)=k,\dim(W_2)=n-k$. Let $f_1,f_2,\cdots f_k$ be an orthonormal row basis of $W_1$ and $f_{k+1},f_{k+2},$ $\cdots,f_n$ be an orthonormal row basis of $W_2$ such that the matrix $n\times n$ matrix of rows $f_1,\cdots,f_n$ is a special orthogonal matrix $O$. Now there exists a square matrix $F$ of size $k$ and a square matrix $H$ of size $n-k$ such that
		\equ{F\matcolthree{q_1}{\vdots}{q_k}=\matcolthree{f_1}{\vdots}{f_k} \text{ and } H\matcolthree{q_{k+1}}{\vdots}{q_n}=\matcolthree{f_{k+1}}{\vdots}{f_n} \text{ where }Q=\matcolthree{q_1}{\vdots}{q_n}.}
		A $k\times k$ minor in the first $k$ rows of $Q$ is nonzero if and only if the corresponding $k\times k$ minor is nonzero in $O$.  
		A similar assertion holds for the remaining $n-k$ rows of $Q$ and $O$. Now we have reduced the claim to a special orthogonal matrix $O$.
		By using appropriate permutation matrices, it is enough to prove the claim for the principal $k\times k$ minor of $O$ and its complementary $(n-k)\times (n-k)$ minor of $O$. 
		
		Now we have if $O=\mattwo{M_1}{M_2}{M_3}{M_4}$ then $O^{-1}=O^t=\mattwo {M_1^t}{M_3^t}{M_2^t}{M_4^t}$ and $\Det(O)=\Det(O^t)=1$. So we have 
		\equ{\mattwo {M_1}{M_2}{0_{(n-k)\times k}}{I_{(n-k)\times (n-k)}}\mattwo{M_1^t}{M_3^t}{M_2^t}{M_4^t}=\mattwo{I_{k \times k}}{0_{k\times (n-k)}}{M_2^t}{M_4^t}.}
		Hence $\Det(M_1)=\Det(M_4^t)=\Det(M_4)$. This proves the claim.
	\end{proof}
	Continuing with the proof of Theorem~\ref{theorem:Minors}, we have that all $k\times k$ minors of $D$ are nonzero. Hence $M[D]$ represents the uniform matroid $U_{k,n}$. Form the matrix $\Disc(D)$ and consider the $(n-k)\times n$ submatrix $C'$ of rows of $\Disc(D)$ corresponding to the basis $\{S_1,S_2,\cdots,S_{n-k}\}$ of $D(U_{k,n})$. Now any two corresponding rows of the matrices $C'$ and $C$ are both orthogonal to the columns of $D$, that is, $C'D=0=CD$ and they both give linear dependence relations of same $k+1$ rows of $D$. Hence each row of $C'$ is proportional to corresponding row of $C$ and not all entries of these two corresponding rows are zero. Therefore every $(n-k)\times (n-k)$ minor of $C'$ is nonzero, since every $(n-k)\times (n-k)$ minor of $C$ is nonzero. The matrix $D$ is the required matrix that we were looking for.
	
	This proves that the determinant polynomial of each $(n-k)\times (n-k)$ minor of the submatrix of $\Disc(X)$ corresponding to the rows $S_1,S_2,\cdots,S_{n-k}$ is not identically zero, if $\{S_1,S_2,\cdots,S_{n-k}\}$ is a basis for $D(U_{k,n})$. Hence Theorem~\ref{theorem:Minors} follows.    
\end{proof}

So we define the Zariski dense open subset $\mcl{U}\subs \mcl{O}\subs \mbb{R}^{kn}$ as follows. Let $X$ and $\Disc(X)$ be defined as before. Let $\mcl{S}=\{S_1,S_2,\cdots,S_l\}$ be a collection of $(k+1)$-subsets of $\{1,2,\cdots,n\}$. Consider the $l\times n$ submatrix $Y_{\mcl{S}}$ of $\Disc(X)$ corresponding to the rows $S_1,S_2,\cdots, S_l$. Let $P_{\mcl{S}}(X)$ be the sum of squares of $l\times l$ minors of the matrix $Y_{\mcl{S}}$. Then we have using Theorem~\ref{theorem:Minors} that, $P_{\mcl{S}}(X)\in \mbb{R}[x_{ij}]$ is not identically zero if and only if the collection $\mcl{S}$ is an independent set in the Dilworth matroid $D(U_{k,n})$. So let 
\equan{VeryGeneric}{\mcl{U}=\{A&=[a_{ij}]_{1\leq i\leq n,1\leq j\leq k}\in \mcl{O}\subs \mbb{R}^{kn}\mid P_{\mcl{S}}(A)\neq 0\\ 
	&\text{whenever }P_{\mcl{S}}[X] \text{ is not an identically zero polynomial}\}.}

\begin{example}
	\label{Example:Minors}
	Let $n=6,k=2$ and $l=4$ with $S_1=\{1,2,3\},S_2=\{1,2,4\},S_3=\{2,3,4\},S_4=\{4,5,6\}$. Then the collection $\{S_1,S_2,S_3,S_4\}$ is not independent in $D(U_{k,n})$ because $\mid S_1\cup S_2\cup S_3\mid=4<2+3=5$. So all the $3\times 3$ minors of the submatrix corresponding to the rows $S_1,S_2,S_3$ are zero because $3>4-2=2$. Hence all the $4\times 4$ minors of the submatrix corresponding to the rows $S_1,S_2,S_3,S_4$ are zero.
\end{example}
\begin{example}
	\label{Example:MinorCase1}
	Let $n=9,k=2,l=3$ with $S_1=\{1,2,3\},S_2=\{4,5,6\},S_3=\{7,8,9\}$. Then the collection $\{S_1,S_2,S_3\}$ is independent in $D(U_{k,n})$. In the $3\times 9$ submatrix corresponding to rows $S_1,S_2,S_3$, there are $3\times 3$ minors which are nonzero as well as there are $3\times 3$ minors which are identically zero in the variables $[x_{ij}]_{1\leq i\leq 9,1\leq j\leq 2}$. 
\end{example}
So we have proved the following theorem.
\begin{theorem}
	\label{theorem:Represent}
	If $A\in \mcl{U}$ then $M[A]$ represents the uniform matroid $U_{k,n}$ and $M[\Disc(A)]$ represents the Dilworth matroid $D(U_{k,n})$.
\end{theorem}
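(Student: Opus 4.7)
The plan is to unpack the definition of $\mcl{U}$ and directly match the linear independence of rows of $\Disc(A)$ with the independence of collections of $(k+1)$-subsets in the Dilworth matroid $D(U_{k,n})$, using Theorem~\ref{theorem:Minors} as the main input. The first assertion, that $M[A]$ represents $U_{k,n}$, will be immediate: since $\mcl{U}\subs\mcl{O}$, every $k\times k$ minor of $A$ is nonzero, so any $k$ rows of $A$ are linearly independent, which is the defining property of a representation of $U_{k,n}$.

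For the second assertion, I would fix a collection $\mcl{S}=\{S_1,S_2,\ldots,S_l\}$ of distinct $(k+1)$-subsets of $\{1,2,\ldots,n\}$ and consider the associated $l\times n$ submatrix $Y_{\mcl{S}}(A)$ of $\Disc(A)$. The $l$ rows of $Y_{\mcl{S}}(A)$ are linearly independent in $\mbb{R}^n$ if and only if some $l\times l$ minor of $Y_{\mcl{S}}(A)$ is nonzero, and because $P_{\mcl{S}}(A)$ is a sum of squares of real numbers, this is equivalent to $P_{\mcl{S}}(A)\neq 0$. Thus the task reduces to verifying $P_{\mcl{S}}(A)\neq 0 \Leftrightarrow \mcl{S}\in\mcl{D}$. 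For the ``only if'' direction, I would invoke Theorem~\ref{theorem:Minors}(3): if $\mcl{S}$ is dependent in $D(U_{k,n})$, then every $l\times l$ minor of $Y_{\mcl{S}}(X)$ is identically zero as a polynomial, hence $P_{\mcl{S}}(X)$ is identically zero and so $P_{\mcl{S}}(A)=0$. For the ``if'' direction, if $\mcl{S}$ is independent in $D(U_{k,n})$, then by Theorem~\ref{theorem:Minors}(1) and~(2) some $l\times l$ minor of $Y_{\mcl{S}}(X)$ is not identically zero in $\mbb{R}[x_{ij}]$; since a sum of squares of real polynomials vanishes identically only when each summand does, $P_{\mcl{S}}(X)\not\equiv 0$, and the defining condition~\eqref{Eq:VeryGeneric} of $\mcl{U}$ then gives $P_{\mcl{S}}(A)\neq 0$.

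Combining these equivalences, an $l$-element subset of rows of $\Disc(A)$ is linearly independent if and only if the indexing collection $\mcl{S}$ is independent in $D(U_{k,n})$, so the vector matroid $M[\Disc(A)]$ represents the Dilworth matroid $D(U_{k,n})$. I do not expect any real obstacle here: Theorem~\ref{theorem:Minors} does the genuine combinatorial and linear-algebraic work, and the set $\mcl{U}$ has been tailored precisely so that this last step is mechanical. The only subtlety worth flagging is the passage from ``some $l\times l$ minor polynomial is not identically zero'' to ``$P_{\mcl{S}}(X)\not\equiv 0$'', which genuinely uses that the coefficients lie in $\mbb{R}$ (so that a finite sum of squares of polynomials can vanish only when every summand does).
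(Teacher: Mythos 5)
Your proposal is correct and follows essentially the same route as the paper: the paper defines $\mcl{U}$ via the sum-of-squares polynomials $P_{\mcl{S}}$ precisely so that, by Theorem~\ref{theorem:Minors}, non-vanishing of $P_{\mcl{S}}(A)$ is equivalent to independence of $\mcl{S}$ in $D(U_{k,n})$, and then records Theorem~\ref{theorem:Represent} as an immediate consequence. Your write-up simply makes explicit the same steps the paper leaves implicit (including the real sum-of-squares point and the trivial $U_{k,n}$ part from $\mcl{U}\subs\mcl{O}$).
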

\subsection{\bf{The Intersection Lattice of a Very Generic Discriminantal Arrangement}}
Now we define a very generic hyperplane arrangement and a very generic discriminantal arrangement, though it is mentioned in Definition~\ref{defn:GenericVeryGeneric}.
\begin{defn}
	\label{defn:GenericVeryGenericArrangements}
	Let $k,n$ be positive integers. Let \equ{\mcl{H}_n^k=\{H_1,H_2,\ldots,H_n\}} 
	be a generic hyperplane arrangement of $n$ hyperplanes in $\mbb{R}^k$. Let the equation for $H_i$ be given by 
	\equ{\us{j=1}{\os{k}{\sum}}a_{ij}x_j=b_i,\text{ with } a_{ij}, b_i\in \mbb{R}, 1\leq j\leq k, 1\leq i\leq n.}
	Let $\mcl{U}$ be the zarski dense open set in $\mbb{R}^{kn}$ as mentioned in Theorem~\ref{theorem:Represent} and in Equation~\ref{Eq:VeryGeneric}. 
	We say the generic hyperplane arrangement $\mcl{H}_n^k$ is very generic if $A=[a_{ij}]_{1\leq i\leq n,1\leq j\leq k}\in \mcl{U}$.
	Also here the associated discriminantal arrangement of hyperplanes passing through the origin in $\mbb{R}^n$ 
	is given by
	\equ{\mcl{C}^n_{\binom{n}{k+1}}=\{M_{\{i_1,i_2,\ldots,i_{k+1}\}}\mid 1\leq 
		i_1<i_2<\ldots<i_{k+1}\leq n\}} where the hyperplane $M_{\{i_1,i_2,\ldots,i_{k+1}\}}$ 
	passing through the origin in $\mbb{R}^n$ in the variables $y_1,y_2,\ldots,y_n$ has the equation given 
	by 
	\equ{\Det
		\begin{pmatrix}
			a_{i_11} & a_{i_12} & \cdots & a_{i_1(k-1)} & a_{i_1k} & y_{i_1}\\
			a_{i_21} & a_{i_22} & \cdots & a_{i_2(k-1)} & a_{i_2k} & y_{i_2}\\
			\vdots   & \vdots   & \ddots & \vdots       & \vdots   & \vdots\\
			a_{i_{k-1}1} & a_{i_{k-1}2} & \cdots & a_{i_{k-1}(k-1)} & a_{i_{k-1}k} & y_{i_{k-1}}\\
			a_{i_k1} & a_{i_k2} & \cdots & a_{i_k(k-1)} & a_{i_mm} & y_{i_k}\\
			a_{i_{k+1}1} & a_{i_{k+1}2} & \cdots & a_{i_{k+1}(k-1)} & a_{i_{k+1}k} & y_{i_{k+1}}\\
		\end{pmatrix}
		=0}
	is said to be a very generic discriminantal arrangement.
\end{defn}
Now we describe a lattice which arises from the Dilwork matroid $D(U_{k,n})$.
\subsubsection{\bf{Dilworth Lattice $P(n,k)$}}
\begin{defn}
	The set $P(n,k)$ consists of collections of all sets of the form $\mcl{S}=\{S_1,S_2,\cdots,S_r\}$, where $S_i\subs\{1,2,\cdots,n\}$ where $S_i\subs\{1,2,\cdots,n\}$, each of cardinality at least $k+1$ such that $\mid \us{i\in I}{\cup} S_i\mid > k+ \us{i\in I}{\sum} (\mid S_i\mid-k)$ for all $I\subs \{1,2,\cdots,r\}$ with $\mid I \mid \geq 2$. They partially order $P(n,k)$ by letting $\{S_1,S_2,\cdots,S_r\}=\mcl{S}\leq \mcl{T}=\{T_1,T_2,\cdots,T_p\}$, if, for each $1\leq i\leq r$ there exists $1\leq j\leq p$ such that $S_i\subseteq T_j$.
\end{defn}
Now we show that this lattice is isomorphic to the intersection lattice of a very generic discriminantal arrangement. Let $A\in \mcl{U}$ which represents the uniform matroid $U_{k,n}$ and $M[\Disc(A)]$ which represents the Dilworth matroid $D(U_{k,n})$ be such that $A$ gives rise to a very generic discriminantal arrangement $\mcl{C}^n_{\binom{n}{k+1}}$. Now the intesection lattice of $\mcl{C}^n_{\binom{n}{k+1}}$ is isomorphic to the lattice of flats $L(n,k)$ of that arises from the rows of $\Disc(A)$ by taking orthogonal complement.
The lattice $L(n,k)$ is the lattice of subspaces of $\mbb{R}^n$ which are spanned by the rows of $\binom{n}{k+1}\times n$ matrix $\Disc(A)$.  For any collection $\{S_1,S_2,\cdots,S_m\}$ of $(k+1)$-subsets we have 
\equ{\us{i=1}{\os{m}{\cap}}M_{S_i}= \langle \ga_{S_i}:1\leq i\leq m\rangle^{\perp}}
where $\ga_{S_i},1\leq i\leq m$ are the rows in $\Disc(A)$ corresponding to rows $S_1,S_2,\cdots,S_m$.

For an arbitrary antichain $\mcl{S}=\{S_1,S_2,\cdots,S_m\}$ of subsets of $\{1,2,\cdots,n\}$ define $V_{\mcl{S}}$ as the row span of those rows in $\Disc(A)$ which correspond to sets $S$ of cardinality $k+1$ such that $S\subs S_i$ for some $i, 1\leq i\leq m$. If $\mcl{S}=\{S\}$ then denote $V_{\mcl{S}}$ by $V_{S}$. So if $\mid S\mid\geq k+1$ then $\dim V_{S}=\mid S\mid-k$. If $\mcl{S}\in P(n,k)$ and $S_1,S_2\in \mcl{S}$ are two distinct elements then $\mid S_1\cap S_2\mid<k$. So here $\mcl{S}$ is an antichain.
Now we define a map $\gf: P(n,k)\lra L(n,k)$ as $\gf(\mcl{S})=V_{\mcl{S}}$.

\begin{theorem}
	\label{theorem:IsoPoset}
	The map $\gf: P(n,k)\lra L(n,k)$ is an isomorphism of posets.
\end{theorem}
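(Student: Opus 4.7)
The plan is to verify, in order, that $\gf$ is (i) well-defined and order-preserving, (ii) dimension-correct with $\dim V_{\mcl{S}}=\sum_{i=1}^{r}(|S_i|-k)$, (iii) injective, and (iv) surjective. Step (i) is immediate: $V_{\mcl{S}}$ is by construction the span of certain rows of $\Disc(A)$, so lies in $L(n,k)$; and if $\mcl{S}\leq \mcl{T}$ then every spanning row of $V_{\mcl{S}}$ appears among the spanning rows of $V_{\mcl{T}}$, forcing $V_{\mcl{S}}\subseteq V_{\mcl{T}}$.

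Step (ii) will be the technical core. For a single set $S$ with $|S|\geq k+1$, I will first identify $V_S$ with the left kernel of $A_S$ embedded in $\mbb{R}^n$ by zero-extension; since $A\in\mcl{U}$ forces $A_S$ to have rank $k$, this gives $\dim V_S=|S|-k$. Next, the $P(n,k)$-condition applied to each pair $\{i,i'\}$ yields $|S_i\cap S_{i'}|\leq k-1$, so no $(k+1)$-subset is simultaneously contained in two different $S_i$. I will then select, in each $V_{S_i}$, a basis of $|S_i|-k$ specific circuits: fix a $k$-subset $B_i\subset S_i$ and take the $(k+1)$-circuits $B_i\cup\{s\}$ as $s$ ranges over $S_i\setminus B_i$. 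To conclude the dimension formula it suffices to show that the (disjoint) union of these circuit-bases is independent in the Dilworth matroid $D(U_{k,n})$; by Theorem~\ref{theorem:Represent} the corresponding rows of $\Disc(A)$ will then be linearly independent, and the reverse inequality $\dim V_{\mcl{S}}\leq\sum(|S_i|-k)$ is automatic. The required condition $|\bigcup_{T\in\mcl{B}'}T|\geq k+|\mcl{B}'|$ for every sub-collection $\mcl{B}'$ will be verified by a Hall-type marriage argument in the spirit of the proof of Theorem~\ref{theorem:Minors}, exploiting precisely the strict inequalities in the definition of $P(n,k)$ to furnish the slack needed; the bases $B_i$ may have to be chosen adaptively.

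With the dimension formula in hand, injectivity and surjectivity will follow in tandem. Given any flat $V\in L(n,k)$ I will set $\mcl{F}_V=\{T:|T|=k+1,\,\ga_T\in V\}$, so that by matroid closure $V$ is the span of $\{\ga_T:T\in\mcl{F}_V\}$, and then define a candidate pre-image $\mcl{S}(V)$ as the collection of maximal subsets $S\subseteq\{1,\ldots,n\}$ of cardinality at least $k+1$ such that every $(k+1)$-subset of $S$ belongs to $\mcl{F}_V$. Using the dimension formula from (ii), together with the flatness of $V$ (which forces the rank of $\mcl{F}_V$ to match $\dim V$), I will verify that $\mcl{S}(V)\in P(n,k)$, that $\gf(\mcl{S}(V))=V$, and that $\mcl{S}(\gf(\mcl{S}))=\mcl{S}$; this yields bijectivity and makes the inverse order-preserving automatically from its explicit description. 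The principal obstacle will be the combinatorial selection in step (ii): constructing base subsets $B_i\subseteq S_i$ so that the induced circuit-bases satisfy the Dilworth independence inequalities on every sub-collection requires a careful Hall matching that simultaneously respects all the $P(n,k)$ inequalities of $\mcl{S}$.
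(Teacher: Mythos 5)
Your steps (i)--(ii), and your route to injectivity, can be made to work. In fact the combinatorial verification in (ii) needs no Hall-type matching and no adaptive choice of the $B_i$: for subsets $S_i'\subseteq S_i$ the slack $\lvert\bigcup_{i\in J}S_i'\rvert-k-\sum_{i\in J}(\lvert S_i'\rvert-k)$ can only increase when elements are deleted from the $S_i$, so the strict $P(n,k)$ inequalities for the full sets already give every subfamily inequality required for independence of the circuits $B_i\cup\{s\}$ in $D(U_{k,n})$; with Theorem~\ref{theorem:Represent} this yields $\dim V_{\mcl{S}}=\sum_i(\lvert S_i\rvert-k)$ and, by the same strictness, that any $(k+1)$-set $T$ with $\ga_T\in V_{\mcl{S}}$ must satisfy $T\subseteq S_i$ for a single $i$ (a tight dependent subfamily cannot involve two groups). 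Combined with a short gluing argument in the spirit of Lemma~\ref{lemma:PosetInequality}, this gives $\mcl{S}(\gf(\mcl{S}))=\mcl{S}$, hence injectivity, by a route genuinely different from the paper's (which uses Lemmas~\ref{lemma:Monotone} and~\ref{lemma:PosetInequality} directly).

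The genuine gap is surjectivity, i.e.\ your claim that $\mcl{S}(V)\in P(n,k)$ for an arbitrary flat $V\in L(n,k)$. Your dimension formula is proved only for collections already known to lie in $P(n,k)$, so invoking it (or ``flatness forces the rank of $\mcl{F}_V$ to match $\dim V$'') to verify the defining inequalities for the family of maximal complete sets of $\mcl{F}_V$ is circular. The pairwise condition $\lvert S_i\cap S_j\rvert<k$ does follow from maximality (if it failed, every $(k+1)$-subset of $S_i\cup S_j$ would have its row in $V$), and for a minimal violating index set $I$ the boundary case $\lvert\bigcup_{i\in I}S_i\rvert=k+\sum_{i\in I}(\lvert S_i\rvert-k)$ can also be excluded by a maximality argument; but when the inequality fails strictly (possible only for $\lvert I\rvert\geq 3$) the associated circuit family is dependent, the dimension count gives no information, and your outline provides no mechanism to rule this configuration out --- yet this is exactly where very-genericity of $A$ must enter (for non-generic $A$ such deficient concurrency patterns do occur, e.g.\ the triangle-with-altitudes example of Theorem~\ref{theorem:SixLines}). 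The paper closes precisely this hole with Lemma~\ref{lemma:Antichain}: starting from an independent set of circuit rows spanning $V$, tight subfamilies are repeatedly merged to produce some $\mcl{T}\in P(n,k)$ with $V_{\mcl{T}}=V$, which is all surjectivity needs (the identification of $\mcl{T}$ with your $\mcl{S}(V)$ is a further statement, the paper's Theorem~\ref{theorem:Concurrency}). Without a merging device of this kind, or a substitute argument for the strictly deficient case, your plan does not establish surjectivity.
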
 

We give a proof of this theorem after proving the following three lemmas.
\begin{defn}
	Let $\gn(S)=\max(0,\mid S\mid-k)$. For $\mcl{F}=\{S_1,S_2,\cdots,S_m\}$ a collection of subsets of $\{1,2,\cdots,n\}$ define
	\equ{\Gd(\mcl{F})=\gn(\us{i=1}{\os{m}{\cup}}S_i)-\us{i=1}{\os{m}{\sum}}\gn(S_i).}
\end{defn}
\begin{remark}
	If $\mcl{S}=\{S_1,S_2,\ldots,S_m\}$ is a collection of $(k+1)$-subsets independent in $U_{k,n}$   then $\Gd(\mcl{F})\geq 0$ for all $\mcl{F}\subseteq \mcl{S}$.
	If $\mcl{S}\in P(n,k)$ then $\Gd(\mcl{F})>0$ whenever $\mcl{F}\subseteq \mcl{S}$ and it has at least two elements.
\end{remark}
\begin{lemma}
	\label{lemma:Antichain}
	Let $\mcl{S}$ be an antichain with the properties $\mid S\mid \geq k+1$ for all $S\in \mcl{S}$ and $\Gd(\mcl{F})\geq 0$ for all $\mcl{F}\subseteq \mcl{S}$. Suppose we have that $V_\mcl{S}=\us{S\in \mcl{S}}{\sum}V_S=\us{S\in\mcl{S}}{\bigoplus}V_S$, that is, the sum is direct. If $S\nin P(n,k)$ then there exists an antichain $\mcl{S}'$ with the same three properties as $\mcl{S}$ and such that $V_{\mcl{S}}=V_{\mcl{S}'}, \mid \mcl{S}\mid >\mid \mcl{S}'\mid$ and $\mcl{S}<\mcl{S}'$.   	
\end{lemma}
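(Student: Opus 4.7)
The plan is to identify a subcollection $\mcl{F}\sbq\mcl{S}$ with $|\mcl{F}|\geq 2$ at which the deficit $\Gd$ drops to zero, collapse it to the single set $T:=\bigcup_{S\in\mcl{F}}S$, and absorb any element of $\mcl{S}$ now contained in $T$. Such an $\mcl{F}$ exists because $\mcl{S}\notin P(n,k)$ means the defining strict inequality fails at some subcollection of size $\geq 2$, and the standing hypothesis $\Gd\geq 0$ forces $\Gd(\mcl{F})=0$ there. Set $\mcl{S}_1:=\{S\in\mcl{S}\setminus\mcl{F}: S\sbq T\}$ and
\[\mcl{S}':=(\mcl{S}\setminus(\mcl{F}\cup\mcl{S}_1))\cup\{T\}.\]
Note that $T\notin\mcl{S}$, since otherwise $T\in\mcl{S}$ would contain every $S_i\in\mcl{F}$, violating the antichain property; hence $|\mcl{S}'|=|\mcl{S}|-|\mcl{F}|-|\mcl{S}_1|+1<|\mcl{S}|$. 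Every $S\in\mcl{S}$ lies inside some element of $\mcl{S}'$ (itself, or $T$), giving $\mcl{S}\leq\mcl{S}'$, strict because the cardinalities differ.

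Next I would verify the three structural properties of $\mcl{S}'$. For the antichain property, each $S\in\mcl{S}\setminus(\mcl{F}\cup\mcl{S}_1)$ satisfies $S\not\sbq T$ by definition of $\mcl{S}_1$, while $T\not\sbq S$ since otherwise $S\supseteq S_i$ for every $S_i\in\mcl{F}$, contradicting the antichain property of $\mcl{S}$; the minimum cardinality $k+1$ is preserved because $|T|\geq|S_i|\geq k+1$. For the deficit condition, given $\mcl{F}'\sbq\mcl{S}'$ I split on whether $T\in\mcl{F}'$: if not, $\mcl{F}'\sbq\mcl{S}$ and the inequality is the hypothesis; if $\mcl{F}'=\{T\}\cup\mcl{F}''$ with $\mcl{F}''\sbq\mcl{S}\setminus(\mcl{F}\cup\mcl{S}_1)$, then using $\gn(T)=\sum_{S\in\mcl{F}}\gn(S)$ (equivalent to $\Gd(\mcl{F})=0$) together with $T\cup\bigcup_{S\in\mcl{F}''}S=\bigcup_{S\in\mcl{F}\cup\mcl{F}''}S$ yields $\Gd(\mcl{F}')=\Gd(\mcl{F}\cup\mcl{F}'')\geq 0$ by the assumption on $\mcl{S}$.

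The critical algebraic step is the identity $V_T=\bigoplus_{S\in\mcl{F}}V_S$, from which both $V_{\mcl{S}'}=V_\mcl{S}$ and the directness of the new decomposition follow at once: the absorbed $V_S$ for $S\in\mcl{S}_1$ already lie inside $V_T$, and any subsum of the direct sum $V_\mcl{S}=\bigoplus_{S\in\mcl{S}}V_S$ is itself direct. The inclusion $V_T\supseteq\sum_{S\in\mcl{F}}V_S$ is trivial and the right-hand sum is direct as a subsum of $V_\mcl{S}$; equality I would deduce by a dimension count, using that in the very generic setting $\dim V_S=\gn(S)$ for every subset of size $\geq k+1$ (Theorem~\ref{theorem:Minors} applied to $A\in\mcl{U}$ says that the rows of $\Disc(A)$ indexed by $(k+1)$-subsets of $S$ span the full $(|S|-k)$-dimensional space of linear dependencies among the rows of $A$ indexed by $S$). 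Combined with $\Gd(\mcl{F})=0$ this gives $\dim\bigoplus_{S\in\mcl{F}}V_S=\sum_{S\in\mcl{F}}\gn(S)=\gn(T)=\dim V_T$, forcing equality.

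The main obstacle is precisely this dimension identity $\dim V_T=|T|-k$: it is not part of the stated hypotheses on $\mcl{S}$ and must be imported from the genericity theorem for the representation of the Dilworth matroid. Without it, the merged $V_T$ could \emph{a priori} be strictly larger than $\bigoplus_{S\in\mcl{F}}V_S$, harbouring accidental dependencies among the rows of $A$ indexed by $T$ that are not visible from the individual $S_i$, and the entire merging strategy would collapse. Once this input is secured, everything else is combinatorial bookkeeping on $\Gd$, the antichain property, and the direct-sum decomposition inherited from $V_\mcl{S}$.
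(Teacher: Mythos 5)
Your proof is correct and follows essentially the same route as the paper: locate a subfamily $\mcl{F}\subseteq\mcl{S}$ with at least two elements and $\Gd(\mcl{F})=0$, replace it by its union $T=\bigcup_{S\in\mcl{F}}S$, and use the genericity fact $\dim V_S=\gn(S)$ (valid because $A\in\mcl{U}$, via the Dilworth matroid representation) to force $V_T=\us{S\in\mcl{F}}{\bigoplus}V_S$ by a dimension count, after which the antichain and deficit checks are the same bookkeeping as in the paper. The only cosmetic difference is your absorption set $\mcl{S}_1$: the paper shows it is automatically empty (for $S\in\mcl{S}\setminus\mcl{F}$ with $S\subseteq T$ one gets $\Gd(\mcl{F}\cup\{S\})=-\gn(S)<0$, contradicting the hypothesis), whereas you carry it along harmlessly.
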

\begin{proof}
	By assumption, $\Gd(\mcl{F})=0$ for some $\mcl{F}\subs \mcl{S}$ with at least two elements. Let $U=\cup \mcl{F}$. Now we observe that 
	$\us{S\in \mcl{F}}{\bigoplus} V_{S}=\us{S\in \mcl{F}}{\sum} V_{S}\subseteq V_{U}$ and $\us{S\in \mcl{F}}{\sum}\dim V_{S}=\us{S\in \mcl{F}}{\sum}\gn(S)=\gn(U)=\dim V_U$. Hence we have $\us{S\in \mcl{F}}{\bigoplus} V_{S}=\us{S\in \mcl{F}}{\sum} V_{S}=V_{U}$. Replace the sets $S\in \mcl{F}\subseteq \mcl{S}$ with their union $U$ to get a collection $\mcl{S}'$. Now $\mcl{S}'$ is antichain. Suppose not, that is, there exists an element $S\in \mcl{S}\bs \mcl{F}$ such that $S\subseteq U =\cup \mcl{F}$. Then we have 
	$0\leq \Gd(\cup \mcl{F}\cup S)=\gn(U)-\us{F\in \mcl{F}}{\sum}\gn(F)-\gn(S)=\Gd(\mcl{F})-\gn(S)=-\gn(S)<0$ which is a contradiction. Hence $\mcl{S}'$ is an antichain and satisfies the property that $\mcl{E}\subseteq \mcl{S}'\Ra\Gd(\mcl{E})\geq 0,\mid \mcl{S}\mid >\mid\mcl{S}'\mid$ and $\mcl{S}<\mcl{S}'$. This proves the lemma.  	
\end{proof}
\begin{lemma}
	\label{lemma:Monotone}
	If $\mcl{S}$ and $\mcl{T}$ are any antichains with $\mcl{S}\leq \mcl{T}$ and $\Gd(\mcl{F})\geq 0$ for all $\mcl{F}\subseteq \mcl{S}$, then $\us{S\in \mcl{S}}{\sum}\gn(S)\leq \us{T\in \mcl{T}}{\sum}\gn(T)$.
\end{lemma}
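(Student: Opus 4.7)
The plan is to prove Lemma~\ref{lemma:Monotone} by pushing each set of $\mcl{S}$ up into a chosen container in $\mcl{T}$, partitioning $\mcl{S}$ by this choice, and then bounding each fiber using the hypothesis $\Gd \ge 0$ together with monotonicity of $\gn$.

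First, for each $S\in\mcl{S}$, use the defining property of $\mcl{S}\le\mcl{T}$ to pick some $\gs(S)\in\mcl{T}$ with $S\sbq \gs(S)$. This yields a function $\gs:\mcl{S}\lra\mcl{T}$. For each $T\in\mcl{T}$ set $\mcl{S}_T=\gs^{-1}(T)\sbq\mcl{S}$, so the $\mcl{S}_T$ partition $\mcl{S}$, and by construction $\cup\,\mcl{S}_T\sbq T$.

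Next I would apply the two key facts to each fiber. From $\mcl{S}_T\sbq\mcl{S}$ the hypothesis gives $\Gd(\mcl{S}_T)\ge 0$, i.e.
\equ{\gn\Big(\us{S\in\mcl{S}_T}{\bigcup}S\Big)\ge \us{S\in\mcl{S}_T}{\sum}\gn(S).}
On the other hand, $\gn(U)=\max(0,|U|-k)$ is monotone in $U$ with respect to inclusion, and $\cup\,\mcl{S}_T\sbq T$, so
\equ{\gn\Big(\us{S\in\mcl{S}_T}{\bigcup}S\Big)\le \gn(T).}
Chaining these yields $\sum_{S\in\mcl{S}_T}\gn(S)\le \gn(T)$ for every $T\in\mcl{T}$ (the case $\mcl{S}_T=\es$ is vacuous since the empty sum is $0\le\gn(T)$).

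Finally, summing over all $T\in\mcl{T}$ and using the partition of $\mcl{S}$:
\equ{\us{S\in\mcl{S}}{\sum}\gn(S)=\us{T\in\mcl{T}}{\sum}\us{S\in\mcl{S}_T}{\sum}\gn(S)\le \us{T\in\mcl{T}}{\sum}\gn(T),}
which is the desired inequality. The proof is essentially a two-line argument once the right grouping is in place, so there is no serious obstacle. The only minor subtlety is that $\gs$ need not be uniquely determined (an $S\in\mcl{S}$ may lie in several $T$'s), but this is immaterial because any choice yields a valid partition and the two inequalities above depend only on the containment $\cup\,\mcl{S}_T\sbq T$ and on $\mcl{S}_T\sbq\mcl{S}$. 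A secondary subtlety is that $\gn$ is not the identity on set-size but its truncation at $0$; this causes no difficulty since $\gn$ is still monotone and $\Gd(\mcl{S}_T)\ge 0$ directly bounds $\sum\gn(S)$ by $\gn(\cup\,\mcl{S}_T)$, which is already nonnegative.
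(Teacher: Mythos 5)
Your proof is correct and is essentially the paper's argument in a non-inductive form: the paper peels off one $T\in\mcl{T}$ at a time by induction, applying $\Gd(\mcl{F})\geq 0$ to $\mcl{F}=\{F\in\mcl{S}:F\subseteq T\}$ together with monotonicity of $\gn$, which is exactly your fiberwise estimate $\sum_{S\in\mcl{S}_T}\gn(S)\leq\gn(\cup\,\mcl{S}_T)\leq\gn(T)$. Summing over the partition rather than inducting is only a cosmetic difference, and your handling of empty fibers and of the non-uniqueness of the container is fine.
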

\begin{proof}
	The proof is by induction on the cardinality of $\mcl{S}$. Choose a set $T\in \mcl{T}$ so that the subfamily $\mcl{F}=\{F\in \mcl{S}: F\subseteq T\}$ is non-empty. Then we have 
	\equ{\us{S\in \mcl{S}}{\sum}\gn(S)=\us{F\in \mcl{F}}{\sum}\gn(F)+\us{S\in \mcl{S}\bs \mcl{F}}{\sum}\gn(S) \leq \gn(\cup\mcl{F})+\us{S\in \mcl{S}\bs \mcl{F}}{\sum}\gn(S)\leq \gn(T)+\us{S\in \mcl{S}\bs \mcl{F}}{\sum}\gn(S).}
	Since $\mcl{S}\bs\mcl{F}\leq \mcl{T}\bs\{T\}$, the lemma follows by induction.
\end{proof}
\begin{lemma}
	\label{lemma:PosetInequality}
	If $\mcl{S}\in P(n,k), T\subseteq \{1,2,\cdots,n\}$ with $\mid T\mid \geq k+1$ and suppose for all $R\subseteq T$ with $\mid R\mid =k+1$ we have $\{R\}\leq \mcl{S}$, then $\{T\}\leq \mcl{S}$. 
\end{lemma}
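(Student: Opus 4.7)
The plan is to exploit the $P(n,k)$ condition on pairs of elements of $\mcl{S}$ to deduce a ``small pairwise intersection'' property, and then run a swap-graph connectivity argument on the $(k+1)$-subsets of $T$.

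First I would extract the key structural fact: for any two distinct $S_i,S_j\in\mcl{S}$, one has $|S_i\cap S_j|<k$. Indeed, applying the defining inequality of $P(n,k)$ to $I=\{i,j\}$ gives
\equ{|S_i\cup S_j|\;>\;k+(|S_i|-k)+(|S_j|-k)=|S_i|+|S_j|-k,}
so $|S_i\cap S_j|=|S_i|+|S_j|-|S_i\cup S_j|<k$. In particular, no $(k+1)$-subset of $\{1,\ldots,n\}$ can be contained in two distinct members of $\mcl{S}$; so for each $(k+1)$-subset $R\subseteq T$ the hypothesis $\{R\}\leq\mcl{S}$ supplies a \emph{unique} index $j(R)$ with $R\subseteq S_{j(R)}$.

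Next I would show that $j$ is constant on the collection of $(k+1)$-subsets of $T$ by a swap argument. Suppose $R_1,R_2\subseteq T$ are two $(k+1)$-subsets that differ in exactly one element, so $|R_1\cap R_2|=k$. If $j(R_1)\neq j(R_2)$, then $R_1\cap R_2\subseteq S_{j(R_1)}\cap S_{j(R_2)}$ would force $|S_{j(R_1)}\cap S_{j(R_2)}|\geq k$, contradicting the pairwise intersection bound above. Hence $j(R_1)=j(R_2)$ whenever $R_1$ and $R_2$ are adjacent in the Johnson-type graph on $\binom{T}{k+1}$ whose edges are pairs of subsets differing in one element. Since this graph is connected (any two $(k+1)$-subsets of $T$ can be transformed one into the other by successive single-element swaps), the map $R\mapsto j(R)$ is constant on $\binom{T}{k+1}$; call its common value $i_0$, so every $(k+1)$-subset $R\subseteq T$ satisfies $R\subseteq S_{i_0}$.

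Finally I would deduce $T\subseteq S_{i_0}$ by one more swap: if some $t\in T\setminus S_{i_0}$ existed, then since $|T|\geq k+1$ the set $T\setminus\{t\}$ has at least $k$ elements, so I can pick a $k$-subset $R_0\subseteq T\setminus\{t\}$ and form $R:=R_0\cup\{t\}$, a $(k+1)$-subset of $T$ containing $t$. By the previous step $R\subseteq S_{i_0}$, contradicting $t\notin S_{i_0}$. Thus $T\subseteq S_{i_0}$, i.e.\ $\{T\}\leq\mcl{S}$, as required. The only delicate step is checking that adjacency in the swap graph is preserved (i.e.\ the intersection contains $k$ elements, which is precisely what forces equality of the two ``host'' sets via the pairwise bound); everything else is bookkeeping.
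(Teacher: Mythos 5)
Your proposal is correct and follows essentially the same route as the paper: the pairwise bound $\mid S_i\cap S_j\mid<k$ from the $P(n,k)$ condition with $\mid I\mid=2$, followed by the observation that two $(k+1)$-subsets of $T$ sharing $k$ elements must lie in the same member of $\mcl{S}$, iterated over all of $T$. Your version merely makes explicit (via Johnson-graph connectivity and the uniqueness of the host set) what the paper compresses into ``by applying this procedure repeatedly.''
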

\begin{proof}
	Suppose $S_1,S_2\in \mcl{S}$ are two distinct sets. Then $\mid S_1\cup S_2\mid > \mid S_1 \mid + \mid S_2\mid -k$. So we get $\mid S_1\cap S_2\mid<k$. 
	If $R_1\subseteq T, R_2\subseteq T$ are two $(k+1)$-subsets such that $\mid R_1\cap R_2\mid=k$ and $R_1\subseteq S_1\in \mcl{S},R_2\subseteq S_2\in \mcl{S}$ then we have $S_1=S_2$. So $R_1\cup R_2\subseteq S_1=S_2$. By applying this procedure repeatedly we conclude that $\{T\}\leq \mcl{S}$.	
\end{proof} 

Now we prove Theorem~\ref{theorem:IsoPoset}.
\begin{proof}
	If $\mcl{S},\mcl{T}\in P(n,k),\mcl{S}\leq \mcl{T}$ the  $V_{\mcl{S}}\subseteq V_{\mcl{T}}$. So the map $\gf$ is order preserving.
	Let $V$ be a span of some rows of $\Disc(A)$, that is, $V\in L(n,k)$. Let $S_1,S_2,\cdots S_m$ be an independent set in $D(U_{k,n})$ such that the rows corresponding to them in $\Disc(A)$ span $V$, that is, if $\mcl{S}=\{S_1,S_2,\cdots,S_m\}$ then $V=V_{\mcl{S}}$. Now $\mcl{S}$ is an antichain satisfying the hypothesis of Lemma~\ref{lemma:Antichain}. If $\mcl{S}\in P(n,k)$ then $\gf(\mcl{S})=V_{\mcl{S}}$. If not, then by the repeated application of Lemma~\ref{lemma:Antichain}, we obtain an antichain $\mcl{T}$ such that $\mcl{S}\leq \mcl{T}$ such that $V=V_{\mcl{S}}=V_{\mcl{T}}$ and $\mcl{T}\in P(n,k)$ also $\mcl{T}$ satisfies the hypothesis of Lemma~\ref{lemma:Antichain}, that is, $V_\mcl{T}=\us{T\in \mcl{T}}{\sum}V_T=\us{T\in\mcl{T}}{\bigoplus}V_T$, the sum is direct and $\gf(\mcl{T})=V_{\mcl{T}}$. So the map $\gf$ is surjective.	
	
	Now we prove that $\gf$ is injective. Suppose on the contrary, we have that $\gf(\mcl{S})=\gf(\mcl{T})$ for two distinct elements $\mcl{S},\mcl{T}\in P(n,k)$. We assume without loss of generality that $\mcl{T}\leq \mcl{S}$ is not valid. By Lemma~\ref{lemma:PosetInequality} there exists a $(k+1)$-set $R$ which is contained in subset of $\mcl{T}$ but not contained in any subset of $\mcl{S}$. Since $V_{\mcl{S}}=V_{\mcl{T}}$, We can choose a minimal collection $\mcl{F}\leq \mcl{S}$ of $(k+1)$-subsets such that $\mcl{F}$ is a independent in the Dilworth lattice $D(U_{k,n})$ and the row corresponding to $R$ in $\Disc(A)$ is linearly dependent on the rows corresponding to $F\in \mcl{F}$ in $\Disc(A)$. Hence we have by minimality of $\mcl{F}$, 
	\equ{\gn(\cup \mcl{F}\cup R)<\mid \mcl{F}\mid +1 \text{ and on the other hand } \gn(\cup \mcl{F})\geq \mid \mcl{F}\mid.}
	Hence we conclude that $R\subs \cup \mcl{F}, \Gd(\mcl{F})=0$, that is, $\gn(\cup\mcl{F})=\mid \mcl{F}\mid$.
	
	Let $\mcl{E}$ be the subcollection consisting of those subsets $E\in \mcl{S}$ which contain some $F\in \mcl{F}$. Let $\mcl{E}'$ be the subcollection obtained from $\mcl{E}$ by intersecting all sets in $\mcl{E}$ with $\cup \mcl{F}$. So we have $\mcl{F}\leq \mcl{E}'$ and 
	\equ{\gn(\cup \mcl{E}')=\gn(\cup \mcl{F})=\us{F\in \mcl{F}}{\sum}\gn(F)\leq \us{E'\in \mcl{E}'}{\sum}\gn(E')}
	where the last inequality follows form Lemma~\ref{lemma:Monotone}. Now by adding further elements to each $E'\in \mcl{E}'$ we get that 
	\equ{\gn(\cup \mcl{E})\leq \us{E\in  \mcl{E}}{\sum}\gn(E) \Ra \Gd(\mcl{E})\leq 0.} By the choice of $R$ we have that $\mcl{E}$ has at least two elements. Hence we arrive at a contradiction to $\mcl{S}\in P(n,k)$. So $\gf$ is injective.
	
	Now we prove that the inverse of $\gf$ is order preserving. Let $\mcl{S},\mcl{T}\in P(n,k)$ be such that $\gf(\mcl{S})=V_{\mcl{S}}\subseteq V_{\mcl{T}}=\gf(\mcl{T})$. Choose a basis of rows in $\Disc(A)$ for $V_{\mcl{S}}$, that is, an independent set $\mcl{S}_0$ in $D(U_{k,n})$ and extend it to a basis of rows in $\Disc(A)$ for $V_{\mcl{T}}$, that is, an independent set $\mcl{T}_0$ in $D(U_{k,n})$. So we have $\mcl{S}_0\subseteq \mcl{T}_0$ are a collection of $(k+1)$-subsets. Now we apply Lemma~\ref{lemma:Antichain} repeatedly to obtain $\mcl{S}_1$ such that $\mcl{S}_0\leq \mcl{S}_1$ and $\gf(\mcl{S})=V_{\mcl{S}}=V_{\mcl{S}_0}=V_{\mcl{S}_1}=\gf(\mcl{S}_1)$. So $\mcl{S}=\mcl{S}_1$ by injectivity of $\gf$. Now the collection $\mcl{S}_1\cup(\mcl{T}_0\bs\mcl{S}_0)$ is an antichain because the sum $V_{\mcl{S}_1}+V_{\mcl{T}_0\bs\mcl{S}_0}$ is direct and all elements of $\mcl{T}_0\bs\mcl{S}_0$ are independent $(k+1)$-subsets. Also we have $\Gd(\mcl{E})\geq 0$ for all $\mcl{E}\subseteq \mcl{S}_1\cup(\mcl{T}_0\bs\mcl{S}_0)$ we have $\Gd(\mcl{E})\geq 0$. Now we apply Lemma~\ref{lemma:Antichain} to obtain a set $\mcl{T}_1$ such that $\mcl{S}_1\cup(\mcl{T}_0\bs\mcl{S}_0) \leq \mcl{T}_1$ and $\gf(\mcl{T})=V_{\mcl{T}}=V_{\mcl{T}_0}=V_{\mcl{T}_1}=\gf(\mcl{T}_1)$. So we have $\mcl{T}=\mcl{T}_1$ as $\gf$ is injective and $\mcl{S}_1\leq \mcl{T}_1\Ra \mcl{S}\leq \mcl{T}$. This proves that the inverse of $\gf$ is order preserving. Hence Theorem~\ref{theorem:IsoPoset} follows.
\end{proof}
\subsection{\bf{New Description of the Lattice Elements in $P(n,k)$}}
In this section we describe the lattice elements in a more geometric manner. Here again we assume that $A\in \mcl{U}$ which represents the uniform matroid $U_{k,n}$ and $M[\Disc(A)]$ which represents the Dilworth matroid $D(U_{k,n})$ be such that $A$ gives rise to a very generic discriminantal arrangement $\mcl{C}^n_{\binom{n}{k+1}}$. 
\begin{defn}[Concurrency Closed Sub-collection and Concurrency Closure]
	\label{defn:CC}
	~\\
	Let $n>k$ be two positive integers. Let \equ{\mcl{E}=\{\{i_1,i_2,\cdots,i_{k+1}\}\mid 1\leq i_1<i_2<\cdots<i_k<i_{k+1}\leq n\}} be the collection of all subsets of 
	cardinality $k+1$. Let $\mcl{D}\subs \mcl{E}$ be any arbitrary collection. 
	
	We say $\mcl{D}$ is concurrency closed if the following criterion for any element $S\in \mcl{E}$ is satisfied with respect to $\mcl{D}$. Suppose there exists $\{S_1,S_2,\cdots,S_r\}\subseteq \mcl{D}$ and $S_i\neq S,1\leq i\leq r$ such that for every $J\subseteq \{1,2,\cdots,r\}$ we have $\mid \us{j\in J}{\cup}S_j\mid \geq k+\mid J\mid$ and $\mid \us{i=1}{\os{r}{\cup}}S_j\cup S\mid < k+r+1$ then $S\in \mcl{D}$. This definition is motivated by the notion of independence in the Dilworth matroid $D(U_{k,n})$.
	
	We observe that the collection $\mcl{E}$ is concurrency closed.
	Now let $\mcl{D}\subs \mcl{E}$ be any arbitrary collection. Construct the concurrency closure $\overline{\mcl{D}}$ of $\mcl{D}$ as follows. 
	First set $\mcl{D}_0=\mcl{D}$ and add those elements $S\in \mcl{E}$ to $\mcl{D}_0$ if these $S$ satisfy the criterion mentioned above, to obtain $\mcl{D}_1$. Now construct $\mcl{D}_2$ from $\mcl{D}_1$ similarly and so on. 
	We have \equ{\mcl{D}_0=\mcl{D}\subsetneq \mcl{D}_1 \subsetneq \mcl{D}_2\subsetneq \cdots \subsetneq \mcl{D}_n=\overline{\mcl{D}}.}
	Since $\mcl{E}$ is a finite set we obtain $\overline{\mcl{D}}$ from $\mcl{D}_0$ in finitely many steps. Actually it can be shown that $\mcl{D}_1$ itself is concurrency closed and $\mcl{D}_1=\overline{\mcl{D}}$. This is because $\mcl{D}_1$ is set of all elements in $D(U_{k,n})$ which are dependent on $\mcl{D}_0$. Hence we get $\mcl{D}_1=\mcl{D}_2=\mcl{D}_3=\cdots=\ol{\mcl{D}}$.
\end{defn}
\begin{defn}[Base Collection]
	\label{defn:BC}
	~\\
	Let $n>k$ be two positive integers. Let \equ{\mcl{E}=\{\{i_1,i_2,\cdots,i_{k+1}\}\mid 1\leq i_1<i_2<\cdots<i_k<i_{k+1}\leq n\}} be the collection of all subsets of 
	cardinality $k+1$. Let $\mcl{D}\subs \mcl{E}$ be any arbitrary collection. 
	We say $\ti{\mcl{D}}$ is a base collection for $\mcl{D}$ if $\overline{\ti{\mcl{D}}}=\overline{\mcl{D}}$ and $\ti{\mcl{D}}$ is minimal, that is, if 
	$\mcl{D}'$ is any other collection such that $\overline{\mcl{D}'}=\overline{\mcl{D}}$ and $\mcl{D}'\subseteq \ti{\mcl{D}}$ then we have $\ti{\mcl{D}}=\mcl{D}'$.  We can actually show that all minimal bases have equal cardinality. This follows from the property of bases (a standard fact) in matroid theory.
\end{defn}
\begin{defn}[Construction of a Base for a Concurrency Closed Collection]
	\label{defn:BaseConstruction}
	Let $n>k$ be two positive integers. Let \equ{\mcl{E}=\{\{i_1,i_2,\cdots,i_{k+1}\}\mid 1\leq i_1<i_2<\cdots<i_k<i_{k+1}\leq n\}} be the collection of all subsets of 
	cardinality $k+1$. Let $\mcl{D}\subs \mcl{E}$ be a concurrency closed subcollection. We say there is a concurrency of order $m\geq k+1$ in $\mcl{D}$, if there exists a concurrency set $D\subs \{1,2,\cdots,n\}$
	of size $m$ such that all $\binom{m}{k+1}$ subsets of $D$ of size $k+1$ are in the collection $\mcl{D}$. Moreover $D$ should be maximal with respect to this property, that is, there does not exist a set $E \supsetneq D$
	of size more than $m$ such that all $\binom{\mid E \mid}{k+1}$ subsets of size $k+1$ are in the collection $\mcl{D}$.
	Let $m_1,m_2,\cdots,m_r$ be the orders of concurrencies that exist in $\mcl{D}$ with $m_i\geq k+1, 1\leq i\leq r$. Then the cardinality of a base collection $\mcl{D}'$ for $\mcl{D}$ is given by 
	\equ{(m_1-k)+(m_2-k)+\cdots+(m_r-k)=\bigg(\us{i=1}{\os{r}{\sum}}m_i\bigg)-rk.}
	Also see Corollary $3.6$ in C.~A.~Athanasiadis~\cite{MR1720104}.  Let $D_i\subs \{1,2,\cdots,n\}$ be the concurrency set of size $m_i$ which gives rise to the order $m_i$ concurrency in the concurrency closed subcollection $\mcl{D}$. Let $\mcl{S}=\{D_1,D_2,\cdots,D_r\}$. We have that 
	\equa{\#\big(\mcl{D}'&=Base\ of\ (\mcl{D})\big) = n-\dim\bigg(\us{\{i_1,i_2,\cdots,i_k,i_{k+1}\} \in \mcl{D}}{\bigcap} M_{\{i_1<i_2<\cdots<i_k<i_{k+1}\}}\bigg)\\&=\us{D\in \mcl{S}}{\sum}\gn(D)} 
	in the notation of Corollary 3.6 in~\cite{MR1720104} where $\gn(D)=\max(0,\mid D\mid-\ m)$ for $D\subs\{1,2,\cdots,n\}$. We also can show in this case that $\mcl{S}\in P(n,k)$ (See Theorem~\ref{theorem:Concurrency}). Here we do something more. We actually construct a base collection $\mcl{D}'$ for $\mcl{D}$.
	Let the concurrency sets be given by 
	\equ{D_i=\{j^i_1<j^i_2<\cdots<j^i_{m_i}\}, 1\leq i\leq r.}
	Then a base collection $\mcl{D}'$ for $\mcl{D}$ is given by 
	\equ{\{\{j^i_1,j^i_2,\cdots,j^i_k,j^i_l\}\mid k+1 \leq l \leq m_i,1\leq i\leq r\}.}
	This collection $\mcl{D}'$ has the required cardinality. We denote \equ{rank(\mcl{D})=\us{D\in \mcl{S}}{\sum}\gn(D) \ \ (\text{is the cardinality of any base collection of } \mcl{D}).}
\end{defn}
\begin{theorem}
	\label{theorem:Concurrency}
	Let $n>k$ be two positive integers. Let \equ{\mcl{E}=\{\{i_1,i_2,\cdots,i_{k+1}\}\mid 1\leq i_1<i_2<\cdots<i_k<i_{k+1}\leq n\}} be the collection of all subsets of 
	cardinality $k+1$. Let $\mcl{D}\subs \mcl{E}$ be a concurrency closed subcollection. Let $\mcl{S}=\{D_1,D_2,\cdots,D_r\}$ be the collection of sets of concurrencies in $\mcl{D}$. Then 
	\begin{enumerate}
		\item \begin{enumerate}[label=(\alph*)]
			\item $\mcl{S}\in P(n,k)$.
			\item $V_{\mcl{S}}=\us{E\subs D_i, \text{ for some }1\leq i\leq r,\mid E\mid=k+1}{\sum}V_E=\us{E\in \mcl{D}}{\sum}V_E=V_{\mcl{D}}$. 
		\end{enumerate}
		Moreover $\mcl{S}$ is uniquely determined with these two properties $(a),(b)$.
		\item Let the concurrency sets be given by $D_i=\{j^i_1<j^i_2<\cdots<j^i_{m_i}\}, 1\leq i\leq r$. Then a base collection $\mcl{D}'$ for $\mcl{D}$ is given by $\{\{j^i_1,j^i_2,\cdots,j^i_k,j^i_l\}\mid k+1 \leq l \leq m_i,1\leq i\leq r\}$. The
		\item The cardinality of any base of $\mcl{D}$ is $\us{D\in \mcl{S}}{\sum}\gn(D)$. 
	\end{enumerate}
\end{theorem}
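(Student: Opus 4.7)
The plan is to exploit the identification, built into the last observation of Definition~\ref{defn:CC}, between concurrency closure and matroid closure in the Dilworth matroid $D(U_{k,n})$. Since $A \in \mcl{U}$, Theorem~\ref{theorem:Represent} tells us that the vector matroid $M[\Disc(A)]$ represents $D(U_{k,n})$, so the concurrency-closed $\mcl{D}$ corresponds to a flat $V_{\mcl{D}} \in L(n,k)$. The poset isomorphism $\gf \colon P(n,k) \to L(n,k)$ of Theorem~\ref{theorem:IsoPoset} then produces a unique $\mcl{T} \in P(n,k)$ with $\gf(\mcl{T}) = V_{\mcl{D}}$. The whole proof reduces to establishing $\mcl{T} = \mcl{S}$, which will simultaneously deliver part (1)(a) and the uniqueness claim in (1). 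I will treat (1)(b) first since it is elementary, then the comparison $\mcl{T} = \mcl{S}$, and finally (2) and (3) as a matroid consequence.

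For (1)(b), the elementary observation is that any $E \in \mcl{D}$ is itself a concurrency set of order $k+1$ (its only $(k+1)$-subset is $E$, which lies in $\mcl{D}$). By finiteness, $E$ enlarges to a maximal concurrency set, i.e.\ to some $D_i \in \mcl{S}$. Therefore $\mcl{D} = \bigcup_{i=1}^{r} \{E \subseteq D_i : |E| = k+1\}$, from which the span identity $V_{\mcl{D}} = \sum_{i=1}^{r} V_{D_i} = V_{\mcl{S}}$ follows immediately.

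The main obstacle will be the identification $\mcl{S} = \mcl{T}$, because nothing a priori places the combinatorially defined $\mcl{S}$ inside $P(n,k)$. I split it into two inclusions. For $\mcl{S} \subseteq \mcl{T}$: pick $D_i \in \mcl{S}$; for every $(k+1)$-subset $R \subseteq D_i$, $R \in \mcl{D}$, hence $V_R \subseteq V_{\mcl{D}} = V_{\mcl{T}} = \gf(\mcl{T})$. Order preservation of $\gf^{-1}$ gives $\{R\} \leq \mcl{T}$ in $P(n,k)$ for every such $R$; Lemma~\ref{lemma:PosetInequality} then promotes this to $\{D_i\} \leq \mcl{T}$, so $D_i \subseteq T_{j(i)}$ for some index $j(i)$. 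But each $T_j$ is itself a concurrency set in $\mcl{D}$: for every $(k+1)$-subset $R \subseteq T_j$ we have $V_R \subseteq V_{T_j} \subseteq V_{\mcl{T}} = V_{\mcl{D}}$, so $R$ lies in the matroid closure of $\mcl{D}$, which equals $\mcl{D}$. Maximality of $D_i$ then forces $D_i = T_{j(i)}$, giving $\mcl{S} \subseteq \mcl{T}$. For $\mcl{T} \subseteq \mcl{S}$: each $T_j$, being a concurrency set, is contained in some maximal $D_{i(j)} \in \mcl{S} \subseteq \mcl{T}$, and the antichain property of $\mcl{T}$ (inherent to $P(n,k)$) forces $T_j = D_{i(j)}$. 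Hence $\mcl{S} = \mcl{T} \in P(n,k)$, proving (1)(a); uniqueness of any $\mcl{S}'$ satisfying the same two properties then follows from injectivity of $\gf$, since $V_{\mcl{S}'} = V_{\mcl{D}}$ forces $\mcl{S}' = \gf^{-1}(V_{\mcl{D}}) = \mcl{T} = \mcl{S}$.

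For (2) and (3), the stated collection $\mcl{D}'$ has cardinality $\sum_{i=1}^{r}(m_i - k) = \sum_{D \in \mcl{S}} \gn(D)$. For each fixed $i$, the subfamily $\mcl{G}_i = \{\{j_1^i, \ldots, j_k^i, j_l^i\} : k+1 \leq l \leq m_i\}$ is independent in $D(U_{k,n})$ (the union of any $s$ members has exactly $k+s$ elements) and its closure contains every $(k+1)$-subset of $D_i$. Because $\mcl{S} \in P(n,k)$, the surjectivity construction in the proof of Theorem~\ref{theorem:IsoPoset}, together with Lemmas~\ref{lemma:Antichain} and~\ref{lemma:Monotone}, implies $V_{\mcl{S}} = \bigoplus_i V_{D_i}$, so the global union $\mcl{D}' = \bigcup_i \mcl{G}_i$ is independent in $D(U_{k,n})$ and its closure recovers $\mcl{D}$ by (1)(b). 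Hence $\mcl{D}'$ is a basis of the matroid flat $\mcl{D}$; the equi-cardinality of all bases of a matroid gives $|\mcl{D}'| = \sum_{D \in \mcl{S}} \gn(D)$, delivering (2) and (3) simultaneously.
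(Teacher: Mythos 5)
Your proposal is correct and follows essentially the same route as the paper: it reduces everything to identifying $\mcl{S}$ with $\gf^{-1}(V_{\mcl{D}})$ via Theorem~\ref{theorem:IsoPoset}, using Lemma~\ref{lemma:PosetInequality}, the identification of concurrency closure with matroid closure in $D(U_{k,n})$ (as noted in Definition~\ref{defn:CC}), and the direct-sum/independence mechanism coming from Lemma~\ref{lemma:Antichain} to count the base. The only differences are organizational (you prove $\mcl{S}=\mcl{T}$ as a set equality via maximality of concurrency sets and the antichain property, while the paper tracks the unions produced by Lemma~\ref{lemma:Antichain} applied to bases of $\mcl{D}$), and these do not change the substance of the argument.
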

\begin{proof}
	Let $\mcl{T}_0=\{T_1,T_2,\cdots,T_p\}\subseteq \mcl{D}$ be a base for the antichain $\mcl{D}$. So $V_{\mcl{T}_0}=\us{i=1}{\os{p}{\bigoplus}}V_{T_i}=V_{\mcl{D}}$. We have $\Gd(\mcl{F})\geq 0$ for all $\mcl{F}\subseteq \mcl{T}_0$ and $\mcl{T}_0$ is an antichain. So we apply Lemma~\ref{lemma:Antichain} repeatedly to obtain a collection $\mcl{T}\in P(n,k)$ such that $\mcl{T}_0\leq \mcl{T}$ and $V_{\mcl{T}}=V_{\mcl{T}_0}=V_{\mcl{D}}$. We prove that $\mcl{T}=\mcl{S}$. 
	
	Let $T\in \mcl{T}$ and $T=T_1\cup T_2\cup \cdots \cup T_l$ after renumbering the subsets $\mcl{T}_0$ and this union is obtained using Lemma~\ref{lemma:Antichain}. Then we have $\Gd(\{T_1\cup T_2\cup \cdots \cup T_l\})=0$. Let $E\subseteq T$ of cardinality $k+1$. Then we have $k+l+1>k+l= \mid T_1\cup T_2\cup \cdots \cup T_l\mid=\mid T\mid=\mid T_1\cup T_2\cup \cdots \cup T_l\cup E\mid$. Hence $\{E,T_1,\cdots,T_l\}$ is dependent and $\{T_1,\cdots,T_l\}$ is independent, that is, $V_{E}\subseteq \us{i=1}{\os{l}{\bigoplus}}V_{T_i}$. So $E\in \ol{\mcl{T}_0}=\mcl{D}$. This shows that $\mcl{P}_{k+1}(T)\subseteq \mcl{D}$ where $\mcl{P}_{k+1}(T)$ is the collection of all $(k+1)$-subsets of $T$. By the definition of concurrency orders and by the definition of $\mcl{S}$ there exists $1\leq i\leq r$ such that $T\subseteq D_i$.  So we have proved that $\mcl{T}\leq \mcl{S}$.

	Now we prove that $\mcl{S}\leq \mcl{T}$. Let $E\subseteq D \in \mcl{S}$ be a $(k+1)$-subset. Then there exists $\mcl{T}_0'=\{E=T_1',T_2',\cdots,T_p'\}$ a base of $\mcl{D}$. So $V_{\mcl{T}'_0}=\us{i=1}{\os{p}{\bigoplus}}V_{T'_i}=V_{\mcl{D}}$. We have $\Gd(\mcl{F})\geq 0$ for all $\mcl{F}\subseteq \mcl{T}'_0$ and $\mcl{T}'_0$ is an antichain. So we apply Lemma~\ref{lemma:Antichain} repeatedly to obtain a collection $\mcl{T}'\in P(n,k)$ such that $\mcl{T}'_0\leq \mcl{T}'$ and $V_{\mcl{T}'}=V_{\mcl{T}'_0}=V_{\mcl{D}}$.
	This implies $\gf(\mcl{T}')=V_{\mcl{D}}=\gf(\mcl{T})$ and $\mcl{T},\mcl{T}'\in P(n,k)$. So by injectivity of $\gf$ we have that $\mcl{T}=\mcl{T}'$. Now clearly $\{E\}=\{T_1'\}\leq \mcl{T}_0'\leq \mcl{T}'=\mcl{T}$ for every $(k+1)$-subset $E\subseteq D\in \mcl{S}$ . Using Lemma~\ref{lemma:PosetInequality} we conclude that $\{D\}\leq \mcl{T}$ as $T\in P(n,k)$ for every $D\in \mcl{S}$. This implies $\mcl{S}\leq \mcl{T}$. 
	So we have $\mcl{S}=\mcl{T}$ and have proved Theorem~\ref{theorem:Concurrency}(1).
	
	Now we prove $(2)$. Since $\{D_1,D_2,\cdots,D_r\}=\mcl{S}=\mcl{T}\in P(n,k)$ obtained by applying Lemma~\ref{lemma:Antichain} repeatedly to a base of $\mcl{D}$, the sum $\us{i=1}{\os{r}{\sum}}V_{D_i}=\us{i=1}{\os{r}{\oplus}}V_{D_i}$ is direct and if $D_i=\{j^i_1<j^i_2<\cdots<j^i_{m_i}\}, 1\leq i\leq r$ then the set $\mcl{D}'=\{\{j^i_1,j^i_2,\cdots,j^i_k,j^i_l\}\mid k+1 \leq l \leq m_i,1\leq i\leq r\}$ gives a base for $\mcl{D}$. This proves $(2)$.
	
	To prove $(3)$ we observe that the cardinality of $\mcl{D}'$ is $\us{D\in \mcl{S}}{\sum}\gn(D)$. Hence the cardinality of any base of $\mcl{D}$ is also given by the same value.   
	
	Hence Theorem~\ref{theorem:Concurrency} follows.
\end{proof}

Now we consider a new poset which is isomorphic to $P(n,k)$. 
\begin{defn}
	Let $C(n,k)$ be the collection of all concurrency closed subcollections of $\mcl{E}=\{\{i_1,i_2,\cdots,i_k,i_{k+1}\}\mid 1\leq i_1<\cdots<i_{k+1}\leq n\}$. Let $\mcl{D}_1,\mcl{D}_2$ be two concurrency closed subcollections of $\mcl{E}$. We say $\mcl{D}_1\leq \mcl{D}_2$ if $\mcl{D}_1\subseteq \mcl{D}_2$.
\end{defn}
Define a map $\psi:P(n,k)\lra C(n,k)$ given by \equ{\psi(\mcl{S})=\{E\subs\{1,2,\cdots,n\}\text{ such that } \mid E\mid =k+1,E\subs S \text{ for some }S\in \mcl{S}\}.}
In fact we will show that $\psi(\mcl{S})$ is concurrency closed and hence is an element of $C(n,k)$.
Define another map $\psi$ is given by $\gs:C(n,k)\lra P(n,k)$ defined as 
\equ{\gs(\mcl{D})=\mcl{S} \text{ where }\mcl{S} \text{ is the collection of sets of concurrencies in }\mcl{D}}
\begin{theorem}
	\label{theorem:PosetConcurrency}
	The maps $\psi$ and $\gs$ are poset isomorphisms and inverses of each other.
\end{theorem}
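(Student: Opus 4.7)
The plan is to verify four items: (a) $\psi$ is well-defined, i.e., $\psi(\mathcal{S}) \in C(n,k)$ for every $\mathcal{S} \in P(n,k)$; (b) $\sigma$ is well-defined, i.e., $\sigma(\mathcal{D}) \in P(n,k)$, which is precisely Theorem~\ref{theorem:Concurrency}(1)(a); (c) $\sigma \circ \psi = \mathrm{id}_{P(n,k)}$ and $\psi \circ \sigma = \mathrm{id}_{C(n,k)}$; (d) both $\psi$ and $\sigma$ are order-preserving.

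For (a), I translate the concurrency-closure criterion through the vector representation $M[\mathrm{Disc}(A)]$. Given $\mathcal{S} \in P(n,k)$, suppose $E \in \mathcal{E}$ and $\{S'_1, \ldots, S'_r\} \subseteq \psi(\mathcal{S})$ (with $S'_i \neq E$) satisfy $|\cup_{j \in J} S'_j| \geq k + |J|$ for every nonempty $J$ and $|\cup_i S'_i \cup E| < k + r + 1$. These two conditions say that $\{S'_1, \ldots, S'_r\}$ is independent in $D(U_{k,n})$ while $\{E, S'_1, \ldots, S'_r\}$ is dependent, hence $V_E \subseteq \sum_i V_{S'_i}$. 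Each $S'_i$ is contained in some $T_i \in \mathcal{S}$, so $V_{S'_i} \subseteq V_{T_i} \subseteq V_{\mathcal{S}} = \phi(\mathcal{S})$, and $\phi(\{E\}) = V_E \subseteq \phi(\mathcal{S})$. Since $\phi$ is a poset isomorphism by Theorem~\ref{theorem:IsoPoset}, $\{E\} \leq \mathcal{S}$ in $P(n,k)$, meaning $E \subseteq S$ for some $S \in \mathcal{S}$, so $E \in \psi(\mathcal{S})$, as required.

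For (c), the identity $\psi \circ \sigma = \mathrm{id}_{C(n,k)}$ follows directly from the definitions: any $E \in \mathcal{D}$ can be extended inside $\mathcal{D}$ to a maximal $D$ whose $(k+1)$-subsets all lie in $\mathcal{D}$, so $D \in \sigma(\mathcal{D})$ and $E \subseteq D$, giving $E \in \psi(\sigma(\mathcal{D}))$; the reverse inclusion holds by the definition of a concurrency set. The identity $\sigma \circ \psi = \mathrm{id}_{P(n,k)}$ uses the antichain property that any two distinct $S, S' \in \mathcal{S} \in P(n,k)$ satisfy $|S \cap S'| < k$. I would show each $S \in \mathcal{S}$ is a maximal concurrency set of $\psi(\mathcal{S})$: if $D \supsetneq S$ had all its $(k+1)$-subsets in $\psi(\mathcal{S})$, pick $i \in D \setminus S$ and a $(k+1)$-subset $F \subseteq D$ with $i \in F$ and $|F \cap S| = k$; then $F \subseteq T$ for some $T \in \mathcal{S}$ forces $|S \cap T| \geq k$, hence $S = T$ and $i \in S$, a contradiction. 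Conversely, every maximal concurrency set $D$ of $\psi(\mathcal{S})$ coincides with some $S \in \mathcal{S}$: pick any $(k+1)$-subset $F \subseteq D$ and an $S \in \mathcal{S}$ with $F \subseteq S$; the same straddling-subset argument forces $D \subseteq S$, and maximality of $D$ gives $D = S$.

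Order-preservation in (d) is direct from the definitions: if $\mathcal{S} \leq \mathcal{T}$, each $(k+1)$-subset of an $S \in \mathcal{S}$ is a $(k+1)$-subset of the containing $T \in \mathcal{T}$, giving $\psi(\mathcal{S}) \subseteq \psi(\mathcal{T})$; and if $\mathcal{D}_1 \subseteq \mathcal{D}_2$, any $D \in \sigma(\mathcal{D}_1)$ extends to a maximal $D' \supseteq D$ whose $(k+1)$-subsets all lie in $\mathcal{D}_2$, so $D' \in \sigma(\mathcal{D}_2)$ and $D \subseteq D'$, yielding $\sigma(\mathcal{D}_1) \leq \sigma(\mathcal{D}_2)$. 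I expect (a) to be the main obstacle: it is the only step genuinely requiring Theorem~\ref{theorem:IsoPoset}, since translating concurrency closure of $\psi(\mathcal{S})$ into an inclusion of flats in $L(n,k)$ is precisely what lets us conclude that $E$ sits inside a member of $\mathcal{S}$; everything else amounts to combinatorial bookkeeping around the antichain property of elements of $P(n,k)$.
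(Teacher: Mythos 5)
Your proposal is correct, and its skeleton (well-definedness of $\psi$, well-definedness of $\sigma$ via Theorem~\ref{theorem:Concurrency}(1)(a), the two composite identities, order preservation) matches the paper's, but two steps are argued differently. For concurrency-closedness of $\psi(\mcl{S})$ the paper extends the row of $E$ to a basis of $V_{\mcl{S}}$, applies Lemma~\ref{lemma:Antichain} repeatedly and then uses injectivity of $\gf$; you instead observe directly that $V_E\subseteq V_{\mcl{S}}$ (translating the closure criterion through $M[\Disc(A)]$, i.e.\ Theorem~\ref{theorem:Represent}) and invoke the full strength of Theorem~\ref{theorem:IsoPoset}, namely that $\gf^{-1}$ is order preserving, to get $\{E\}\leq\mcl{S}$ --- essentially the same route through $L(n,k)$, just shortcutting the Lemma~\ref{lemma:Antichain} iteration. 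The genuine divergence is in $\gs\circ\psi=\mathrm{id}_{P(n,k)}$: the paper deduces it from Theorem~\ref{theorem:Concurrency}(1)(b) ($V_{\gs(\mcl{D})}=V_{\mcl{D}}=V_{\mcl{S}}$) together with injectivity of $\gf$, whereas you identify the maximal concurrency sets of $\psi(\mcl{S})$ with the members of $\mcl{S}$ purely combinatorially, using only the antichain property $|S\cap T|<k$ for distinct $S,T\in\mcl{S}$ (which follows from the defining inequality of $P(n,k)$ with $|I|=2$) and the straddling $(k+1)$-subset argument; I checked this argument and it is sound (in the converse direction $|D\cap S|\geq k+1$ because a full $(k+1)$-subset $F\subseteq D$ lies in $S$, so the required straddling subset exists). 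Your route buys a more elementary, representation-free proof of this identity, in effect reproving the relevant part of Theorem~\ref{theorem:Concurrency} in the special case $\mcl{D}=\psi(\mcl{S})$; the paper's route is shorter given that Theorem~\ref{theorem:Concurrency} is already in hand. The remaining items ($\psi\circ\gs=\mathrm{id}_{C(n,k)}$ and order preservation of both maps) coincide with the paper's proof.
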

\begin{proof}
	First we prove $\psi(\mcl{S})$ is concurrency closed.
	Let $E$ be a $(k+1)$-subset which is dependent on $\psi(\mcl{S})$. So we have $V_E\subseteq V_{\mcl{S}}$. Then there exists a basis for $V_{\mcl{S}}$ whose corresponding $(k+1)$-subsets has the form $\mcl{T}_0=\{T_1=E,T_2,\cdots,T_p\}$. Now the hypothesis of Lemma~\ref{lemma:Antichain} is satisfied for  $\mcl{T}_0$. So by applying Lemma~\ref{lemma:Antichain} repeatedly there exists $\mcl{T}\in P(n,k)$ such that $\mcl{T}_0\leq \mcl{T}$ and $\gf(\mcl{T})=V_{\mcl{T}}=V_{\mcl{T}_0}=V_{\mcl{S}}=\gf(\mcl{S})$. So by injectivity of $\gf$ we get that $\mcl{T}=\mcl{S}$. Since $\{E\}\leq \mcl{T}_0\leq \mcl{T}=\mcl{S}$ there exists $S\in \mcl{S}$ such that $E\subseteq S$. So $E\in \psi(\mcl{S})$. So $\psi(\mcl{S})$ is concurrency closed.
	
	Now let $\psi(\mcl{S})=\mcl{D}$ and $\gs(D)=\mcl{T}$ then we get $V_{\mcl{S}}=V_{\mcl{D}}$ and $V_{\mcl{T}}=V_{\mcl{D}}$ by Theorem~\ref{theorem:Concurrency}(1)(b). Hence $\gf(\mcl{S})=\gf(\mcl{T})\Ra \mcl{S}=\mcl{T}$. This proves that $\gs\circ\psi=\mbb{1}_{P(n,k)}$.
	
	Let $\mcl{D}$ be concurrency closed and let $\gs(D)=\mcl{S}=\{D_1,D_2,\cdots,D_r\}$. Then every $(k+1)$-set $E\in \mcl{D}$ belongs to $\mcl{D}_i$ for some $1\leq i\leq r$ by the definition of sets of concurrencies in $\mcl{D}$. Hence $E\in \psi(\mcl{S})$. So $\mcl{D}\subseteq \psi(S)$. Similarly it clear that $\psi(\mcl{S})\subseteq \mcl{D}$. Hence we get $\psi\circ\gs=\mbb{1}_{C(n,k)}$. 
	
	Now we show that $\psi$ is order preserving. If $\mcl{S}\leq \mcl{T}$ then it is clear that $\psi(\mcl{S})\subseteq \psi(\mcl{T})$.
	Similarly if $\mcl{D}_1\subseteq \mcl{D}_2$ then by definition of sets of concurrencies in $\mcl{D}_1$ and $\mcl{D}_2$ we have $\gs(\mcl{D}_1)\leq \gs(\mcl{D}_2)$. So $\gs$ is also order preserving.
	
	This proves Theorem~\ref{theorem:PosetConcurrency}.
\end{proof}


\begin{thebibliography}{1}
	
	\bibitem{MR2318445} F.~Ardila,
	{\it Computing the Tutte Polynomial of a Hyperplane Arrangement}, Pacific Journal of Mathematics, Vol. {\bf 230} (2007), no. {\bf 1}, pp. 1–26, \url{https://msp.org/pjm/2007/230-1/pjm-v230-n1-p01-s.pdf}, MR2318445
	
	\bibitem{MR1720104} C.~A.~Athanasiadis,
	{\it The largest intersection lattice of a discriminantal arrangement}, Beitr\"{a}ge zur Algebra und Geometrie, Contributions to Algebra and Geometry, Vol. {\bf 40} (1999), no. {\bf 2}, pp. 283-289, \url{https://www.emis.de/journals/BAG/vol.40/no.2/1.html},  MR1720104
	
	\bibitem{MR1456579} M.~Bayer, K.~Brandt,
	{\it Discriminantal Arrangements, fiber polytopes and formality}, Journal of Algebraic Combinatorics, Vol. {\bf 6} (1997), no. {\bf 3}, pp. 229-246, \url{https://doi.org/10.1023/A:1008601810383}, MR1456579
	
	
	\bibitem{MR0766269} H.~H.~Crapo,
	{\it Concurrence geometries}, Advances in Mathematics, Vol. {\bf 54} (Dec. 1984), Issue {\bf 3}, pp. 278-301, \url{https://doi.org/10.1016/0001-8708(84)90043-4}, MR0766269
	
	\bibitem{MR0843374} H.~H.~Crapo,
	{\it The combinatorial theory of structures}, In Matroid Theory (A.~Recski, L.~Lov\'{a}sz eds.), (Szedeg, 1982), Colloquia Mathematica Societatis J\'{a}nos Bolyai, Vol. {\bf 40}, pp. 107-213, North-Holland, Amsterdam, 1985 \url{https://www.elsevier.com/books/matroid-theory/recski/978-0-444-87580-8}, MR0843374
	
	\bibitem{MR0290980} H.~H.~Crapo, G.~C.~Rota,
	{\it On the foundations of combinatorial theory: Combinatorial geometries}, Preliminary edition, The M.I.T. Press, Cambridge, Mass.-London, 1970, pp. iv+289, \url{https://mitpress.mit.edu/books/foundations-combinatorial-theory-preliminary-edition}, MR0290980
	
	\bibitem{MR3618796} A.~Dimca,
	{\it Hyperplane Arrangements: An Introduction}, Universitext, Springer, Cham, 2017, pp. xii+200, ISBN-13: {\bf 978-3-319-56220-9}, ISBN-13: {\bf 978-3-319-56221-6}, \url{https://doi.org/10.1007/978-3-319-56221-6}, MR3618796
	
	\bibitem{MR1209098} M.~Falk,
	{\it A note on discriminantal arrangements}, Proceedings of the American Mathematical Society, Vol. {\bf 122} (1994), no. {\bf 4}, pp. 1221-1227, \url{https://doi.org/10.1090/S0002-9939-1994-1209098-1}, MR1209098
		
	\bibitem{MR3702317} E.~Katz,
	{\it Matroid Theory for Algebraic Geometers, In: M.~Baker, S.~Payne (eds) Nonarchimedean and tropical geometry}, pp. 435-517, Simons Symposium Proceedings, Springer, Cham, 2016, \url{https://doi.org/10.1007/978-3-319-30945-3_12}, \url{https://arxiv.org/abs/1409.3503}, MR3702317
	
	\bibitem{MR3000446} H. Koizumi, Y.~Numata, A.~Takemura, 
	{\it On intersection lattices of hyperplane arrangements generated by generic points}, Annals of Combinatorics, Vol. {\bf 16}, 2012, no. {\bf 4}, pp. 789-813, \url{https:doi.org//10.1007/s00026-012-0161-6} , MR3000446
	
	
	
	\bibitem{MR3899551} A.~Libgober, S.~Settepanella,
	{\it Strata of discriminantal arrangements}, Journal of Singularities: Special volume in honor of the life and mathematics of Egbert Brieskorn, Vol. {\bf 18} (2018), pp. 440-454, \url{https://doi.org/10.5427/jsing.2018.18w}, MR3899551
	
	\bibitem{MR2986882} Y.~Numata, A.~Takemura,
	{\it On computation of the characteristic polynomials of the discriminantal arrangements and the arrangements generated by generic points}, Harmony of Gr\"{o}bner bases and the modern industrial society, 228-252, 2012, World Scientific Publications, Hackensack, New Jersey, \url{https://doi.org/10.1142/9789814383462_0014}, MR2986882  
	
	\bibitem{MR1097620} Yu.~I.~Manin, V.~V.~Schechtman,
	{\it Arrangements of hyperplanes, higher braid groups and higher Bruhat orders}, Algebraic Number Theory — in honor of K.~Iwasawa (J.~Coates, R.~Greenberg, B.~Mazur and I.~Satake, eds. Tokyo: Mathematical Society of Japan, 1989), pp. 289-308, Advanced Studies in Pure Mathematics, Vol. {\bf 17}, Academic Press, Boston, MA, 1989, \url{https://projecteuclid.org/euclid.aspm/1529259076}, \url{https://doi.org/10.2969/aspm/01710289}, MR1097620
	
	
	\bibitem{MR1217488} P.~Orlik, H.~Terao,
	{\it Arrangements of Hyperplanes}, Grundlehren der Mathematischen Wissenschaften [Fundamental Principles of Mathematical Sciences], Vol. {\bf 300} Springer-Verlag, Berlin, 1992, pp. xviii+325, ISBN-13: {\bf 978-3-540-55259-6}, \url{https://doi.org/10.1007/978-3-662-02772-1}, MR1217488
	
	\bibitem{MR2868112} R.~Stanley,
	{\it Enumerative Combinatorics Volume 1}, Second edition, Series: Cambridge Studies in Advanced Mathematics, 49 (2011), Cambridge University Press, Cambridge, pp.  xiv+626,  ISBN-13: {\bf 978-1-107-60262-5}, \url{https://doi.org/10.1017/CBO9781139058520}, MR2868112
	
	\bibitem{MR0400066} T.~Zaslavsky,
	{\it Counting the faces of cut-up spaces},
	Bulletin of the American Mathematical Society, Vol. {\bf 81} (1975), no. {\bf 5}, pp. 916-918, \url{http://www.ams.org/journals/bull/1975-81-05/S0002-9904-1975-13885-7/S0002-9904-1975-13885-7.pdf}, MR0400066
	
	\bibitem{MR0357135} T.~Zaslavsky,
	{\it Facing up to Arrangements: Face-Count Formulas for Partitions of Space by Hyperplanes},
	Memoirs of the American Mathematical Society, Vol. {\bf 1} (1975), Issue 1, no. {\bf 154}, pp. vii+102, ISBN-13: {\bf 978-0-8218-1854-1}, 
	\url{http://dx.doi.org/10.1090/memo/0154}, MR0357135
	
	
	
\end{thebibliography}
\end{document}